\newtheorem{theorem}{Theorem}
\newtheorem{definition}{Definition}
\newtheorem{proposition}{Proposition}
\newtheorem{corollary}{Corollary}
\newtheorem{example}{Example}
\newtheorem{remark}{Remark}
\newtheorem{problem}{Problem}
\begin{document}

\begin{frontmatter}

\title{On quotients of Boolean control networks}

\author[dlut]{Rui~Li}\ead{rui\_li@dlut.edu.cn},
\author[uibe]{Qi~Zhang}\ead{zhangqi@uibe.edu.cn},
\author[pku]{Tianguang~Chu}\ead{chutg@pku.edu.cn}

\address[dlut]{School of Mathematical Sciences, Dalian University
of Technology, Dalian 116024, China}
\address[uibe]{School of Information Technology \& Management,
University of International Business \& Economics, Beijing 100029,
China}
\address[pku]{State Key Laboratory for Turbulence and Complex Systems,
College of Engineering, Peking University, Beijing 100871, China}

\begin{abstract}
In this paper, we focus on the study of quotients of Boolean control
networks (BCNs) with the motivation that they might serve as smaller
models that still carry enough information about the original
network. Given a BCN and an equivalence relation on the state set,
we consider a labeled transition system that is generated by the
BCN. The resulting quotient transition system then naturally
captures the quotient dynamics of the BCN concerned. We therefore
develop a method for constructing a Boolean system that behaves
equivalently to the resulting quotient transition system. The use of
the obtained quotient system for control design is discussed and we
show that for BCNs, controller synthesis can be done by first
designing a controller for a quotient and subsequently lifting it to
the original model. We finally demonstrate the applicability of the
proposed techniques on a biological example.
\end{abstract}

\begin{keyword}
Boolean control networks, quotient transition systems, control
design, stabilization, optimal control.
\end{keyword}

\end{frontmatter}

\section{Introduction}

\label{sec1}

Boolean networks (BNs) and Boolean control networks (BCNs), wherein
each component is characterized with a binary variable, have been
widely employed in modeling biological regulatory networks. After
assembling the components of a system as well as their regulatory
interactions, BN/BCN models can nicely describe the qualitative
temporal behavior of the system \cite{albert2014}. They can also
illuminate how perturbations may disrupt normal behavior and yield
testable predictions which are particularly valuable in less well
understood biological systems \cite{assmann2009}. As a nice
framework for modeling dynamical processes on networks, especially
in biological context, BN/BCN models have led to fruitful insights
for unicellular organisms \cite{christensen2009}, plants
\cite{akman2012}, animals \cite{chaves2008}, and humans
\cite{schlatter2011}, especially human signaling networks implicated
in diseases \cite{zhang2008}. A BN/BCN is typically placed in the
form of a nonlinear (control) system; while interestingly, based on
an algebraic state representation approach, the Boolean dynamics can
be mapped exactly into a standard discrete-time linear dynamics
\cite{chengbook}. This formal simplicity makes it relatively easy to
formulate and solve classical control-theoretic problems for
BNs/BCNs, and thereby lays a suitable foundation for a series of
subsequent studies. Examples include recent investigations of
dynamical properties \cite{hochma2013}, network synchronization
\cite{zhong2014}, controllability \cite{liang2017,lu2016b} and
stabilizability \cite{bof2015,li2017b,li2017,meng2019},
observability \cite{cheng2016,yu2020,yu2019b,zhou2019} and
reconstructibility \cite{fornasini2013a}, disturbance or
input-output decoupling \cite{liu2017,valcher2017,yu2019}, optimal
control \cite{fornasini2014,wu2019}, and more
\cite{rafimanzelat2019,wang2019,zhang2015,zhang2018,zou2015}. The
size of the linear system that describes a BN with $n$ state
variables is $2^n$. Thus, any algorithm based on this algebraic
set-up has an exponential time complexity in the worst case. On the
other hand, it has shown that for several control problems, the
complexity curse can be alleviated or even removed if the structure
of BNs is appropriately constrained \cite{gao2018b,weiss2019}. These
positive developments notwithstanding, it still seems
computationally challenging to solve control-related problems in
general BNs/BCNs, since many such problems have shown to be NP-hard
\cite{akutsu2007,laschov2014,laschov2013b,zhang2016b}. The hardness
results justify the use of exponential time algorithms and
exponential size systems suggested by the algebraic state-space
representation.

In this paper, we focus on studying quotients of BCNs since they can
be seen as lower dimensional models that may still contain enough
information about the original model (whose algebraic representation
is of exponential size). We consider quotient systems for BCNs in
the exact sense that the notion is used in the control community
\cite{chutinan2001,tabuada2005,tabuadabook}. Precisely, given a BCN
and an equivalence relation on its state set, we consider a
(labeled) transition system generated by the BCN and partition the
state set based on the relation. The resulting quotient system then
naturally captures the quotient dynamics of this BCN, so we propose
to develop a Boolean system that generates the transitions of the
quotient transition system (Theorem~\ref{thm1}). Of course, it is
not surprising that additional constraints need to be placed on the
equivalence relation to ensure that the quotient dynamics can indeed
be generated from some Boolean system. A subsequent question is
then, how to obtain an equivalence relation which allows the
construction of a quotient BCN. We fully answer this question by
giving a procedure that converges in a finite number of iterations
to a satisfactory equivalence relation (Theorem~\ref{thm2}). As
applications of the study, we show how the resulting quotient can be
used for controller synthesis. The results tell us that synthesizing
controllers for a BCN can be easily done by first controlling the
quotient system and then lifting the control law back to the
original Boolean model (see Propositions~\ref{prop2} and
\ref{prop3}).

\textit{Notation.} The symbol $\delta_k^i$ denotes the $i$th
canonical basis vector of length $k$, $\Delta_k$ denotes the set
consisting of the canonical vectors $\delta_k^1, \ldots,
\delta_k^k$, and $\mathcal{L}^{k \times r}$ denotes the set of all
$k \times r$ matrices whose columns are canonical vectors of length
$k$. Elements of $\mathcal{L}^{k \times r}$ are called logical
matrices (of size $k \times r$). A $(0,1)$-matrix is a matrix that
consists solely of the $0$ and $1$ entries. The $(i,j)$-entry of a
matrix $A$ is invariably denoted by $(A)_{ij}$. If $A$ and $B$ are
$k \times r$ $(0,1)$-matrices, the meet of $A$ and $B$, denoted by
$A \wedge B$, is the $(0,1)$-matrix with the $(i,j)$-entry equal to
$(A)_{ij} \wedge (B)_{ij}$. For a $k \times l$ $(0,1)$-matrix $C$
and an $l \times r$ $(0,1)$-matrix $D$, the Boolean product of $C$
and $D$, denoted by $C \odot D$, is the $k \times r$ matrix with the
$(i,j)$-entry $\bigvee_{s=1}^l [(C)_{is} \wedge (D)_{sj}]$. Given
two relations $\mathcal{R}_1$ and $\mathcal{R}_2$, $\mathcal{R}_2
\circ \mathcal{R}_1$ denotes the composition of $\mathcal{R}_1$ and
$\mathcal{R}_2$, i.e., the relation defined by $(a, c) \in
\mathcal{R}_2 \circ \mathcal{R}_1$ if and only if there exists $b$
with $(a,b) \in \mathcal{R}_1$ and $(b,c) \in \mathcal{R}_2$.

\section{Preliminaries}

\label{sec2}

\subsection{Algebraic representation of Boolean control networks}

\label{sec2.1}

A BCN is a discrete-time dynamical system with binary state
variables and binary control variables, i.e.,
\begin{align} \label{eq1}
x_1(t+1) &= f_1 (x_1(t), \ldots, x_n(t), u_1(t), \ldots, u_m(t)) ,
\notag
\\
& \vdots \\
x_n(t+1) &= f_n (x_1(t), \ldots, x_n(t), u_1(t), \ldots, u_m(t)) ,
\notag
\end{align}
with $x_i, u_j \in \{1,0\}$ and $f_i \colon \{1,0\}^{n+m}
\rightarrow \{1,0\}$. The dynamics (\ref{eq1}) can be recast into a
form similar to that of a discrete-time linear system, using the
semitensor product of matrices \cite{chengbook}. To be more precise,
we recall that the (\textit{left}) \textit{semitensor product} of
two matrices $A$ and $B$ of sizes $n_1 \times m_1$ and $n_2 \times
m_2$, respectively, denoted by $A \ltimes B$, is defined by $A
\ltimes B = (A \otimes I_{l/m_1}) (B \otimes I_{l/n_2})$, where
$\otimes$ is the Kronecker product of matrices, and $I_{l/m_1}$ and
$I_{l/n_2}$ are the identity matrices of orders $l/m_1$ and $l/n_2$,
respectively, with $l$ being the least common multiple of $m_1$ and
$n_2$. If we identify the Boolean values $1$ and $0$ with the
canonical vectors $\delta_2^1$ and $\delta_2^2$, respectively (so
$x_i$ and $u_j$ in (\ref{eq1}) are vectors in $\Delta_2$), and if we
let $x(t) = x_1(t) \ltimes \cdots \ltimes x_n(t)$ and $u(t) = u_1(t)
\ltimes \cdots \ltimes u_m(t)$, then the Boolean dynamics
(\ref{eq1}) can be represented by an equation of the form
\begin{equation} \label{eq2}
x(t+1) = F \ltimes u(t) \ltimes x(t) ,
\end{equation}
where $F \in \mathcal{L}^{2^n \times 2^{n+m}}$. (The expression on
the right-hand side of (\ref{eq2}) is unambiguous, since the
semitensor product is associative.) For more information on
converting a BCN in the form of (\ref{eq1}) to its algebraic
representation (\ref{eq2}), as well as more information regarding
the properties of the semitensor product, the reader is referred to,
e.g., \cite{chengbook} and \cite{cheng2009b}.

\subsection{Transition systems}

\label{sec2.2}

Our discussion of quotients of BCNs will be based on the notion of
quotient transition systems. We first recall the concept of a
(labeled) transition system.

\begin{definition}[See, e.g., \cite{tabuada2004}] \label{def1}
\normalfont
A (\textit{labeled}) \textit{transition system} is a
tuple $\mathcal{T} = (Q, L, \rightarrow)$ that consists of a set of
states $Q$, a set of labels $L$, and a transition relation
$\rightarrow \, \subseteq Q \times L \times Q$.
\end{definition}

For any $q, q' \in Q$ and any $l \in L$, a transition $(q, l, q')
\in \rightarrow$ means that it is possible to move from state $q$ to
state $q'$ under the action labeled by $l$. Following standard
practice, we denote $q \overset{l}\rightarrow q'$ if $(q, l, q') \in
\rightarrow$.

Recall that an equivalence relation $\mathcal{R}$ on $Q$ is a
reflexive, symmetric, and transitive binary relation on $Q$. Given a
transition system $\mathcal{T}$, if $\mathcal{R}$ is an equivalence
relation on the state set of $\mathcal{T}$, then it naturally
induces a quotient transition system, as follows.

\begin{definition}[See, e.g., \cite{tabuadabook}] \label{def2}
\normalfont
Let $\mathcal{T} = (Q, L, \rightarrow)$ be a transition
system and let $\mathcal{R}$ be an equivalence relation on $Q$. The
quotient transition system $\mathcal{T}/\mathcal{R}$ is defined by
$\mathcal{T} / \mathcal{R} = (Q/\mathcal{R}, L,
\rightarrow_{\mathcal{R}})$, where $Q/ \mathcal{R}$ is the quotient
set (i.e., the set of all equivalence classes $[q] = \{p \in Q
\colon (q,p) \in \mathcal{R} \}$ for $q \in Q$), and for all $[q],
[q'] \in Q/ \mathcal{R}$, $[q] \overset{l}\rightarrow_{\mathcal{R}}
[q']$ if and only if there exist $p \in [q]$ and $p' \in [q']$ such
that $p \overset{l}\rightarrow p'$.
\end{definition}

That is, a state $[q]$ in $\mathcal{T}/ \mathcal{R}$ can make a
transition to another state $[q']$ under an action $l$, if some $p
\in [q]$ can make a transition to some $p' \in [q']$ when taking the
action $l$. In what follows, we will use a similar framework to
study quotients of a BCN.

\section{Quotients of Boolean control networks}

\label{sec3}

\subsection{Constructing quotient Boolean systems}

\label{3.1}

Let us consider a BCN described by the algebraic representation
\begin{multline} \label{eq3}
\Sigma \colon \;\; x(t+1) = F \ltimes u(t) \ltimes x(t) , \quad
x\in \Delta_N, \quad u \in \Delta_M , \\
F \in \mathcal{L}^{N \times NM} .
\end{multline}
(Note that, in the above, $N$ and $M$ are in fact certain powers of
$2$, but we do not need this fact for our argument.) In order to
investigate quotients of (\ref{eq3}), we first turn our attention to
the equivalence relations on its state set $\Delta_N$. An immediate
observation is that every such equivalence relation $\mathcal{R}$
can be viewed as induced by a logical matrix $C$ with $N$ columns,
by saying
\begin{equation}\label{eq4}
(x, x') \in \mathcal{R} \Longleftrightarrow Cx = Cx' .
\end{equation}
Furthermore, the logical matrix $C$ can be chosen of full row rank
(hence in particular having no zero rows). We remark that such a
full row rank matrix can be directly derived from the \textit{matrix
representation} of $\mathcal{R}$. In fact, let $A_{\mathcal{R}}$ be
the $N \times N$ matrix whose entries are given by
\begin{equation*}
(A_{\mathcal{R}})_{ij} =
\begin{cases}
1 & \text{if $(\delta_N^i , \delta_N^j) \in
\mathcal{R}$}, \\
0 & \text{otherwise}.
\end{cases}
\end{equation*}
If $C$ is a matrix which has the same set of rows as
$A_{\mathcal{R}}$ but with no rows repeated, then it must be a
logical matrix with full row rank and fulfill condition (\ref{eq4})
\cite[Lemma 4.6]{r.li}.

\begin{example} \label{eg1} \normalfont
To illustrate this fact, as well as the main idea behind obtaining
an algebraic representation, we consider a BCN as in (\ref{eq1}),
with $n = 3$ and $m = 1$. The corresponding Boolean functions are
given by the truth table shown in Table~\ref{tab1}.
\begin{table}
\caption{Truth table for Example~\ref{eg1}.} \label{tab1} \centering
\begin{tabular}{llll||llll}
\hline $u \, x_1 \, x_2 \, x_3$ & $f_1$ & $f_2$ & $f_3$ & $u \, x_1
\, x_2 \, x_3$ & $f_1$ & $f_2$ & $f_3$ \\
\hline $1 \;\,1 \;\, 1 \;\, 1$ & $1$ & $1$ & $0$ & $0\;\, 1\;\,
1\;\, 1$ & $1$ & $1$ & $1$ \\
$1 \;\,1 \;\, 1 \;\, 0$ & $1$ & $1$ & $1$ & $0\;\, 1\;\, 1\;\, 0$ &
$1$ & $1$ & $1$ \\
$1 \;\,1 \;\, 0 \;\, 1$ & $1$ & $1$ & $1$ & $0\;\, 1\;\, 0\;\, 1$ &
$1$ & $1$ & $1$ \\
$1 \;\,1 \;\, 0 \;\, 0$ & $0$ & $1$ & $1$ & $0\;\, 1\;\, 0\;\, 0$ &
$0$ & $0$ & $0$ \\
$1 \;\,0 \;\, 1 \;\, 1$ & $0$ & $1$ & $0$ & $0\;\, 0\;\, 1\;\, 1$ &
$0$ & $1$ & $0$ \\
$1 \;\,0 \;\, 1 \;\, 0$ & $0$ & $0$ & $1$ & $0\;\, 0\;\, 1\;\, 0$ &
$0$ & $0$ & $1$ \\
$1 \;\,0 \;\, 0 \;\, 1$ & $0$ & $0$ & $0$ & $0\;\, 0\;\, 0\;\, 1$ &
$0$ & $0$ & $0$ \\
$1 \;\,0 \;\, 0 \;\, 0$ & $0$ & $1$ & $1$ & $0\;\, 0\;\, 0\;\, 0$ &
$0$ & $0$ & $1$ \\
\hline
\end{tabular}
\end{table}
Since $n = 3$ and $m = 1$, the size of the matrix $F$ in the
algebraic representation is $8 \times 16$. To find this matrix, we
see from Table~\ref{tab1} that if $u(t) = x_1(t) = x_2(t) = x_3(t) =
1$, we have $x_1(t+1) = x_2(t+1) = 1$, and $x_3(t+1) = 0$. In the
algebraic framework, this corresponds to $u(t) = x_1(t) = x_2(t) =
x_3(t) = \delta_2^1$, $x_1(t+1) =  x_2(t+1) = \delta_2^1$, and
$x_3(t+1) = \delta_2^2$, so
\begin{align*}
&x(t+1) = \delta_2^1 \ltimes \delta_2^1 \ltimes \delta_2^2 =
\delta_8^2, \\
&u(t) \ltimes x(t) = \delta_2^1 \ltimes \delta_2^1 \ltimes
\delta_2^1 \ltimes \delta_2^1 = \delta_{16}^1 .
\end{align*}
Substituting these to the left- and right-hand sides of (\ref{eq2})
yields
\begin{equation} \label{eq4b}
\delta_8^2 = F \ltimes \delta_{16}^1 = F \delta_{16}^1 .
\end{equation}
The second equality follows since the semitensor product is nothing
but the standard product if the multiplied matrices (or vectors)
have compatible sizes \cite{chengbook}. From (\ref{eq4b}), and
considering that right-multiplying a matrix by a canonical vector
yields the corresponding column of the matrix, we know that the
first column of $F$ is $\delta_8^2$. Repeating a similar argument
for each combination in the truth table, we can determine all the
columns of $F$, i.e., we determine the second column of $F$ by
considering the case when $u(t) = x_1 (t) = x_2 (t) = 1$ and $x_3(t)
= 0$, the third column by considering $u(t) = x_1(t) = 1$, $x_2(t) =
0$, and $x_3(t) = 1$, and so on. The matrix we get is
\begin{equation} \label{eq4c} \setcounter{MaxMatrixCols}{20}
F = \big[
\begin{matrix}
\delta_8^2 & \delta_8^1 & \delta_8^1 & \delta_8^5 & \delta_8^6 &
\delta_8^7 & \delta_8^8 & \delta_8^5 & \delta_8^1 &\delta_8^1 &
\delta_8^1 & \delta_8^8 & \delta_8^6 & \delta_8^7 & \delta_8^8 &
\delta_8^7
\end{matrix} \big].
\end{equation}

Consequently, the algebraic representation of this BCN is given by
\begin{equation} \label{eq4d}
x(t+1) = F \ltimes u(t) \ltimes x(t) , \;\; x(t) \in \Delta_8, \;\;
u(t) \in \Delta_2 ,
\end{equation}
with $F$ found above. Note that system (\ref{eq4d}) evolves on the
set $\Delta_8 = \{\delta_8^1, \ldots, \delta_8^8\}$, and each
canonical vector $\delta_8^i$ corresponds to a possible
configuration of the BCN (e.g., $\delta_8^1$ corresponds to
$[1,1,1]$ since $\delta_2^1 \ltimes \delta_2^1 \ltimes \delta_2^1 =
\delta_8^1$, $\delta_8^2$ corresponds to $[1,1,0]$ since $\delta_2^1
\ltimes \delta_2^1 \ltimes \delta_2^2 = \delta_8^2$, etc.). The
trajectories of (\ref{eq4d}) are shown in Fig.~\ref{fig1}.
\begin{figure} [b!] \label{fig1}
\begin{center}
\includegraphics[width= 6.5 cm]{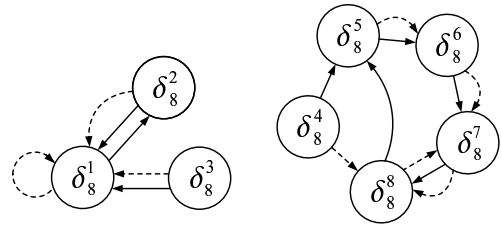}
\caption{Trajectories of system (\ref{eq4d}), which represents the
BCN in Example~\ref{eg1}. A solid line denotes the transition
corresponding to $u(t) = \delta_2^1$ and a dashed line denotes the
transition corresponding to $u(t) = \delta_2^2$.}
\end{center}
\end{figure}
Now let $\mathcal{R}$ be the equivalence relation produced by the
partition $\{ \{\delta_8^1\}, \, \{\delta_8^2, \delta_8^3\}, \,
\{\delta_8^4\}, \, \{\delta_8^5, \delta_8^6, \delta_8^7,
\delta_8^8\} \}$; that is, the pair $(a,b) \in \mathcal{R}$ if and
only if $a$ and $b$ are in the same subset of the partition. By
definition, the matrix that represents $\mathcal{R}$ has a $1$ as
its $(i,j)$-entry when $\delta_8^i$ is related to $\delta_8^j$, and
a $0$ in this position if $\delta_8^i$ is not related to
$\delta_8^j$. Accordingly, we get the following matrix for
$\mathcal{R}$:
\begin{equation*}
A_{\mathcal{R}} =
\begin{bmatrix}
1 \; & 0 \; & 0 \; & 0 \\
0 \; & J_2 \; & 0 \; & 0 \\
0 \; & 0 \; & 1 \; & 0 \\
0 \; & 0 \; & 0 \; & J_4
\end{bmatrix} ,
\end{equation*}
where $J_k$ denotes the all-one matrix of size $k \times k$.
Collapsing the identical rows of $A_{\mathcal{R}}$ yields
\begin{equation} \label{eq4a}
C = \big[
\begin{matrix}
\delta_4^1 & \delta_4^2 & \delta_4^2 & \delta_4^3 & \delta_4^4 &
\delta_4^4 & \delta_4^4 & \delta_4^4
\end{matrix} \big] .
\end{equation}
It is clear that $C$ is a full row rank logical matrix and that
(\ref{eq4}) holds.
\end{example}

\begin{remark} \label{rmk1} \normalfont
Note that if $\mathcal{R}$ is an equivalence relation on $\Delta_N$
induced by a matrix $C \in \mathcal{L}^{\widetilde{N} \times N}$ of
full row rank, then the quotient set $\Delta_N / \mathcal{R}$ is of
cardinality $\widetilde{N}$, and the correspondence $[x] \mapsto Cx$
gives a bijection between the sets $\Delta_N / \mathcal{R}$ and
$\Delta_{\widetilde{N}}$.
\end{remark}

We now consider quotients of (\ref{eq3}). We note that the BCN
(\ref{eq3}) naturally generates a transition system
$\mathcal{T}(\Sigma) = (\Delta_N, \Delta_M, \rightarrow)$, where
\begin{equation} \label{eq5}
x \overset{u}\rightarrow x' \Longleftrightarrow x' = F \ltimes u
\ltimes x .
\end{equation}
(In other words, a transition $x \overset{u}\rightarrow x'$ occurs
in $\mathcal{T}(\Sigma)$ if $u$ steers $\Sigma$ from $x$ to $x'$.)
Let $\mathcal{R}$ be an equivalence relation induced by a full row
rank logical matrix $C$ of size $\widetilde{N} \times N$. Then the
quotient transition system $\mathcal{T}(\Sigma) / \mathcal{R} =
(\Delta_N / \mathcal{R}, \Delta_M, \rightarrow_{\mathcal{R}})$ can
be thought of as having the state set $\Delta_{\widetilde{N}}$; and
the transition relation is then given by
\begin{align} \label{eq6}
z \overset{u} \rightarrow_{\mathcal{R}} z' &\Longleftrightarrow
\text{there exists a transition $x \overset{u}\rightarrow x'$ of
$\mathcal{T}(\Sigma)$} \notag \\
& \qquad \;\, \text{with $z = Cx$ and $z' = C x'$}
\end{align}
(cf. Definition~\ref{def2} and Remark~\ref{rmk1}). For the analysis
to remain in the Boolean context, we expect that the transitions of
$\mathcal{T}(\Sigma) / \mathcal{R}$ are also generated by a Boolean
system. (Here, and below, we use the term ``Boolean system" to refer
to a system of the form (\ref{eq3}) where $N$ and $M$ are not
restricted to be powers of $2$.) It is readily seen that this is the
case if and only if for any $z \in \Delta_{\widetilde{N}}$ and any
$u \in \Delta_M$, there is a unique transition $z \overset{u}
\rightarrow_{\mathcal{R}} z'$ of $\mathcal{T}(\Sigma) /
\mathcal{R}$.\footnote{Note that this is equivalent to only
requiring $\mathcal{T}(\Sigma)/\mathcal{R}$ to be
\textit{deterministic} (i.e., there do not exist transitions of the
form $z \overset{u}\rightarrow_{\mathcal{R}} z'$ and $z
\overset{u}\rightarrow_{\mathcal{R}} z''$ with $z' \neq z''$), since
for any $z \in \Delta_{\widetilde{N}}$ and $u \in \Delta_M$ there
always exists at least one $z' \in \Delta_{\widetilde{N}}$ such that
$z \overset{u}\rightarrow_{\mathcal{R}} z'$.} By (\ref{eq4}),
(\ref{eq5}) and (\ref{eq6}), the latter is equivalent to the
requirement that
\begin{align} \label{eq7}
(a,b) \in \mathcal{R} &\Longleftrightarrow (F \ltimes u \ltimes a ,
F \ltimes u \ltimes b) \in \mathcal{R} \notag \\
& \qquad \;\;\, \text{for all $u \in \Delta_M$}.
\end{align}
We therefore restrict our attention to those $\mathcal{R}$
satisfying (\ref{eq7}).

\begin{remark} \label{rmk2} \normalfont
The meaning of condition (\ref{eq7}) is clear: if we think of
$\mathcal{R}$ as a partition of $\Delta_N$, then the successor set
of each block in this partition is included in a single block of the
partition.
\end{remark}

The following theorem gives a method for explicitly constructing a
Boolean system that generates the transitions of
$\mathcal{T}(\Sigma) / \mathcal{R}$.

\begin{theorem} \label{thm1} \itshape
Consider a BCN $\Sigma$ as in (\ref{eq3}). Suppose that
$\mathcal{R}$ is an equivalence relation on $\Delta_N$ induced by a
matrix $C \in \mathcal{L}^{\widetilde{N} \times N}$ of full row
rank, and that property (\ref{eq7}) holds. For each $1 \leq k \leq
M$, let $F_k$ be the matrix in $\mathcal{L}^{N \times N}$ defined by
$F_k = F \ltimes \delta_M^k$, and let $\widetilde{F}_k = C \odot F_k
\odot C^\top$. Then:
\begin{enumerate} [(a)]
\item $\widetilde{F}_k \in \mathcal{L}^{\widetilde{N} \times
\widetilde{N}}$ for $1 \leq k \leq M$.

\item Let
\begin{equation*} \hspace{-1.5em}
\Sigma_{\mathcal{R}} \colon \;\, x_{\mathcal{R}}(t+1) =
\widetilde{F} \ltimes u(t) \ltimes x_{\mathcal{R}}(t) , \;\;
x_{\mathcal{R}}\in \Delta_{\widetilde{N}}, \;\; u \in \Delta_M
\end{equation*}
be the system where $\widetilde{F} = \big[
\begin{matrix}
\widetilde{F}_1 & \widetilde{F}_2 & \cdots & \widetilde{F}_M
\end{matrix} \big]$. If an input $u \in \Delta_M$ steers $\Sigma$
from a state $a \in \Delta_N$ to a state $a' \in \Delta_N$, then it
also steers $\Sigma_{\mathcal{R}}$ from $Ca$ to $C a'$. Conversely,
if $u$ steers $\Sigma_{\mathcal{R}}$ from a state $q \in
\Delta_{\widetilde{N}}$ to a state $q' \in \Delta_{\widetilde{N}}$,
then there is a one-step transition of $\Sigma$ from some $a \in
\Delta_N$ to some $a' \in \Delta_N$ with $Ca = q$ and $Ca' = q'$,
under this input $u$.
\end{enumerate}
\end{theorem}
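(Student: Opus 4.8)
The plan is to translate everything into statements about the underlying set maps and then read off the conclusion from two elementary Boolean-product computations. Since $F_k=F\ltimes\delta_M^k\in\mathcal{L}^{N\times N}$, it is the matrix of a map $\phi_k\colon\{1,\dots,N\}\to\{1,\dots,N\}$, i.e.\ $F_k\delta_N^j=\delta_N^{\phi_k(j)}$; concretely $\phi_k$ is the one-step successor map of $\Sigma$ under the constant input $\delta_M^k$, and because the semitensor product is associative and reduces to the ordinary product on compatibly sized factors one has $F\ltimes\delta_M^k\ltimes x=F_kx$ and, from the block form of $\widetilde F$, $\widetilde F\ltimes\delta_M^k\ltimes x_{\mathcal R}=\widetilde F_kx_{\mathcal R}$. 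Likewise $C\in\mathcal{L}^{\widetilde N\times N}$ is the matrix of a map $c\colon\{1,\dots,N\}\to\{1,\dots,\widetilde N\}$, which is surjective precisely because $C$ has full row rank; by (\ref{eq4}) the fibers $c^{-1}(i)$, $1\le i\le\widetilde N$, are exactly the equivalence classes of $\mathcal R$.

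First I would unwind the two Boolean products. Column $i$ of $C^\top$ is the $(0,1)$-indicator vector of the fiber $c^{-1}(i)$; hence column $i$ of $F_k\odot C^\top$ is the indicator of the image set $\phi_k(c^{-1}(i))$; and finally the $l$th entry of column $i$ of $\widetilde F_k=C\odot F_k\odot C^\top$ equals $1$ if and only if $l\in c\bigl(\phi_k(c^{-1}(i))\bigr)$. Each of these is a one-line computation directly from the definition of $\odot$.

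For part (a) --- the only place where hypothesis (\ref{eq7}) is used --- I would show that $c\bigl(\phi_k(c^{-1}(i))\bigr)$ is always a singleton. Indeed, if $j_1,j_2\in c^{-1}(i)$ then $C\delta_N^{j_1}=C\delta_N^{j_2}$, so $(\delta_N^{j_1},\delta_N^{j_2})\in\mathcal R$; applying (\ref{eq7}) with $u=\delta_M^k$ gives $(F_k\delta_N^{j_1},F_k\delta_N^{j_2})\in\mathcal R$, that is $c(\phi_k(j_1))=c(\phi_k(j_2))$. Thus $\phi_k(c^{-1}(i))$ sits inside a single fiber, its $c$-image is a single index $i_k'(i)$, and this image is nonempty because $c^{-1}(i)\neq\emptyset$ (surjectivity of $c$, equivalently full row rank of $C$). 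Combined with the preceding paragraph, column $i$ of $\widetilde F_k$ is $\delta_{\widetilde N}^{i_k'(i)}$, so $\widetilde F_k\in\mathcal{L}^{\widetilde N\times\widetilde N}$. I expect this to be the crux: showing that every column of $\widetilde F_k$ is a canonical vector genuinely needs both hypotheses --- (\ref{eq7}) forbids columns with more than one $1$, and the full row rank of $C$ forbids zero columns --- and one should resist using anything beyond the forward implication of (\ref{eq7}), which is all the theorem requires.

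For part (b) I would fix $u=\delta_M^k$ and reduce the claim to the single identity $C(F_ka)=\widetilde F_k(Ca)$ for every $a\in\Delta_N$. Writing $a=\delta_N^j$, the left-hand side is $\delta_{\widetilde N}^{c(\phi_k(j))}$ and the right-hand side is $\widetilde F_k\delta_{\widetilde N}^{c(j)}=\delta_{\widetilde N}^{i_k'(c(j))}$, and the two agree since $\phi_k(j)\in\phi_k\bigl(c^{-1}(c(j))\bigr)\subseteq c^{-1}\bigl(i_k'(c(j))\bigr)$. Then: if $u$ steers $\Sigma$ from $a$ to $a'=F_ka$, this identity gives $Ca'=\widetilde F_k(Ca)$, i.e.\ $u$ steers $\Sigma_{\mathcal R}$ from $Ca$ to $Ca'$; conversely, if $u$ steers $\Sigma_{\mathcal R}$ from $q=\delta_{\widetilde N}^i$ to $q'=\widetilde F_kq$, pick any $j\in c^{-1}(i)$, put $a=\delta_N^j$ and $a'=F_ka$, and note $a'=F\ltimes u\ltimes a$ is a one-step transition of $\Sigma$ with $Ca=\delta_{\widetilde N}^i=q$ and $Ca'=\widetilde F_k(Ca)=q'$. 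Everything outside part (a) is thus index bookkeeping, and the argument closes.
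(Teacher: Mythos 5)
Your proof is correct and follows essentially the same route as the paper's: the full row rank of $C$ (equivalently, nonemptiness of every fiber $c^{-1}(i)$) gives at least one $1$ in each column of $\widetilde F_k$, the forward implication of (\ref{eq7}) gives at most one, and part (b) is then index bookkeeping. You merely recast the paper's entrywise Boolean-product manipulations in the language of set maps and fibers, and package part (b) as the single semiconjugacy identity $C(F_ka)=\widetilde F_k(Ca)$ --- a clean but equivalent organization of the same argument.
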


\begin{proof}
(a) It is clear that each $\widetilde{F}_k$ is a $(0,1)$-matrix of
size $\widetilde{N} \times \widetilde{N}$. So we need only show
that, for $1 \leq k \leq M$, every column of $\widetilde{F}_k$
contains exactly one $1$. Let $1 \leq k \leq M$ and $1 \leq j \leq
\widetilde{N}$ be fixed. Since $C$ (being logical) has no zero rows,
there exists $1 \leq s \leq N$ such that $(C)_{js} = 1$. Choose $1
\leq r \leq N$ so that $\delta_N^r = F \ltimes \delta_M^k \ltimes
\delta_N^s$. Then $(F_k)_{rs} = 1$. For this $r$, let $1 \leq i \leq
\widetilde{N}$ be such that $(C)_{ir} = 1$. Then, by the definition
of Boolean matrix multiplication, the $(i,j)$-entry of
$\widetilde{F}_k$ is equal to $\bigvee_{p=1}^N \bigvee_{l=1}^N
[(C)_{ip} \wedge (F_k)_{pl} \wedge (C)_{jl}]$, and hence equal to
$1$ (since $(C)_{ir} = (F_k)_{rs} = (C)_{js} = 1$). This means that
each column of $\widetilde{F}_k$ has at least one $1$. Now suppose
that there is another $i'$ with $1 \leq i' \leq \widetilde{N}$ such
that $(\widetilde{F}_k)_{i'j} = 1$. Then we must have $(C)_{i'r'} =
1$, $(F_k)_{r's'} = 1$, and $(C)_{js'} = 1$ for some $1 \leq r', s'
\leq N$. These imply that $C \delta_N^{r'} =
\delta_{\widetilde{N}}^{i'}$, $C \delta_N^{s'} =
\delta_{\widetilde{N}}^j$, and $\delta_N^{r'} = F \ltimes \delta_M^k
\ltimes \delta_N^{s'}$. Since $(C)_{js} = 1$, we have $C \delta_N^s
= \delta_{\widetilde{N}}^j$ and, thus, $(\delta_N^s, \delta_N^{s'})
\in \mathcal{R}$. By (\ref{eq7}), it follows that $(F \ltimes
\delta_M^k \ltimes \delta_N^s, F \ltimes \delta_M^k \ltimes
\delta_N^{s'}) \in \mathcal{R}$, that is, $(\delta_N^r,
\delta_N^{r'}) \in \mathcal{R}$. Hence, $\delta_{\widetilde{N}}^i =
C \delta_N^r = C \delta_N^{r'} = \delta_{\widetilde{N}}^{i'}$, which
shows that $i = i'$. Thus, there is a unique $1$ in each column of
$F_k$.

(b) We first note that the system $\Sigma_{\mathcal{R}}$ is well
defined since, by (a), $\widetilde{F}$ is a logical matrix of size
$\widetilde{N} \times \widetilde{N}M$. Let $1 \leq r,s \leq N$, let
$1 \leq k \leq M$, and assume that the input $u = \delta_M^k$ steers
$\Sigma$ from $\delta_N^s$ to $\delta_N^r$. We have $(F_k)_{rs} =
1$. Suppose that $C \delta_N^s = \delta_{\widetilde{N}}^j$ and $C
\delta_N^r = \delta_{\widetilde{N}}^i$. Then $(C)_{js} = (C)_{ir} =
1$ and, hence, $(\widetilde{F}_k)_{ij} = 1$ by the definition of the
Boolean product. This combined with (a) implies that
$\delta_{\widetilde{N}}^i = \widetilde{F} \ltimes \delta_M^k \ltimes
\delta_{\widetilde{N}}^j$; in other words, the input $u =
\delta_M^k$ steers $\Sigma_{\mathcal{R}}$ from
$\delta_{\widetilde{N}}^j$ to $\delta_{\widetilde{N}}^i$.

Conversely, let $1 \leq i,j \leq \widetilde{N}$ and suppose that the
input $u = \delta_M^k$ takes $\Sigma_{\mathcal{R}}$ from
$\delta_{\widetilde{N}}^j$ to $\delta_{\widetilde{N}}^i$. Then
$(\widetilde{F}_k)_{ij} = 1$, and hence there must be some $1 \leq
r,s \leq N$ such that $(C)_{ir} = 1$, $(F_k)_{rs} = 1$, and
$(C)_{js} = 1$. Thus, $C \delta_N^s = \delta_{\widetilde{N}}^j$, $C
\delta_N^r = \delta_{\widetilde{N}}^i$, and $\Sigma$ can be driven
from $\delta_N^s$ to $\delta_N^r$ with the input $u = \delta_M^k$.
\end{proof}

Since, by the above theorem, $\Sigma_{\mathcal{R}}$ generates the
transitions of $\mathcal{T}(\Sigma) / \mathcal{R}$ (cf.
(\ref{eq6})), it can be interpreted as a quotient of the BCN
$\Sigma$.

\begin{example} \label{eg2} \normalfont
Consider the BCN in Example~\ref{eg1}. The matrix $F$ in the
algebraic representation is given by (\ref{eq4c}). Let $C$ be as in
(\ref{eq4a}) and let $\mathcal{R}$ be the equivalence relation
defined in Example~\ref{eg1}, induced by $C$. It is easy to check
that $\mathcal{R}$ satisfies (\ref{eq7}). Set $F_1 = F \ltimes
\delta_2^1$ and $F_2 = F \ltimes \delta_2^2$. A calculation yields
\begin{align*}
\widetilde{F}_1 &= C \odot F_1 \odot C^\top = \big[
\begin{matrix}
\delta_4^2 & \delta_4^1 & \delta_4^4 & \delta_4^4
\end{matrix} \big] , \\
\widetilde{F}_2 &= C \odot F_2 \odot C^\top = \big[
\begin{matrix}
\delta_4^1 & \delta_4^1 & \delta_4^4 & \delta_4^4
\end{matrix} \big] .
\end{align*}
Fig.~\ref{fig2} shows the trajectories of $\Sigma_{\mathcal{R}}$
with $\widetilde{N} = 4$, $M = 2$, and $\widetilde{F} \in
\mathcal{L}^{4 \times 8}$ given by $\widetilde{F} = \big[
\begin{matrix}
\widetilde{F}_1 & \widetilde{F}_2
\end{matrix} \big]$.
\begin{figure} \label{fig2}
\begin{center}
\includegraphics[width= 7.3 cm]{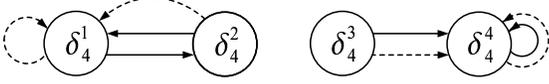}
\caption{Trajectories of the Boolean system $\Sigma_{\mathcal{R}}$
defined in Example~\ref{eg2}. A solid (resp. dashed) line represents
the transition resulting from $u(t) = \delta_2^1$ (resp. $u(t) =
\delta_2^2$).}
\end{center}
\end{figure}
We see from the figure that $\Sigma_{\mathcal{R}}$ is indeed a
quotient of the original BCN, which does not distinguish between
states related by $\mathcal{R}$.
\end{example}

Using Theorem~\ref{thm1} one can obtain a quotient Boolean system,
once an equivalence relation satisfying property (\ref{eq7}) is
found. In the next subsection, we will address the issue of
computing equivalence relations which allow the construction of
quotient Boolean systems.

\subsection{Computing equivalence relations}

\label{sec3.2}

Precisely, in this subsection we are concerned with the following
problem: given an equivalence relation $\mathcal{S}$ on $\Delta_N$,
determine the \textit{maximal} (with respect to set inclusion)
equivalence relation $\mathcal{R}$ on $\Delta_N$ such that
$\mathcal{R} \subseteq \mathcal{S}$ and (\ref{eq7}) holds. Here the
relation $\mathcal{S}$ may be interpreted as a preliminary
classification of the states of a BCN; see Section~\ref{sec4} below
for specific instances. We are interested in finding the maximal
equivalence relation since in many cases we want the size of the
quotient system to be as small as possible.

First, we remark that such a maximal equivalence relation always
exists and it is unique, as shown in the following proposition.

\begin{proposition} \label{prop1} \itshape
Let $\mathcal{S}$ be an equivalence relation on $\Delta_N$. Then the
set of all relations $\mathcal{R} \subseteq \Delta_N \times
\Delta_N$ that are contained in $\mathcal{S}$ and satisfy property
(\ref{eq7}) has a unique maximal element (with respect to set
inclusion), and the maximal element is an equivalence relation on
$\Delta_N$.
\end{proposition}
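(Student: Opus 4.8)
Proposition~\ref{prop1} asserts that among all relations contained in $\mathcal{S}$ and satisfying (\ref{eq7}), there is a largest one, and it is automatically an equivalence relation. The plan is to exhibit this maximal element directly as the union of all such relations, and then verify that this union inherits the two properties we care about. Let $\mathfrak{F}$ be the family of all relations $\mathcal{R} \subseteq \Delta_N \times \Delta_N$ with $\mathcal{R} \subseteq \mathcal{S}$ and satisfying (\ref{eq7}), and set $\mathcal{R}^* = \bigcup_{\mathcal{R} \in \mathfrak{F}} \mathcal{R}$. The family $\mathfrak{F}$ is nonempty, since the diagonal relation $\{(x,x) : x \in \Delta_N\}$ clearly lies in it. By construction $\mathcal{R}^*$ contains every member of $\mathfrak{F}$, so once we show $\mathcal{R}^* \in \mathfrak{F}$ and that $\mathcal{R}^*$ is an equivalence relation, we are done.

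First I would check the two defining conditions for membership in $\mathfrak{F}$. Containment $\mathcal{R}^* \subseteq \mathcal{S}$ is immediate, being a union of subsets of $\mathcal{S}$. For property (\ref{eq7}), note that the forward implication is really just the statement that $\mathcal{R}$ is preserved by each map $a \mapsto F \ltimes u \ltimes a$, while the backward implication is preservation under the ``inverse'' — but the key observation is that (\ref{eq7}) for a relation $\mathcal{R}$ is equivalent to the single equality $(F_u \times F_u)^{-1}(\mathcal{R}) = \mathcal{R}$ for every $u$, where $F_u$ denotes the map $a \mapsto F \ltimes u \ltimes a$ and $F_u \times F_u$ acts on pairs. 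Since preimages commute with arbitrary unions, $(F_u \times F_u)^{-1}(\mathcal{R}^*) = \bigcup_{\mathcal{R} \in \mathfrak{F}} (F_u \times F_u)^{-1}(\mathcal{R}) = \bigcup_{\mathcal{R} \in \mathfrak{F}} \mathcal{R} = \mathcal{R}^*$, so (\ref{eq7}) holds for $\mathcal{R}^*$. Hence $\mathcal{R}^* \in \mathfrak{F}$ and is its maximal element.

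It remains to show $\mathcal{R}^*$ is an equivalence relation. Reflexivity is clear since the diagonal is in $\mathfrak{F}$, hence contained in $\mathcal{R}^*$. For symmetry and transitivity, rather than arguing about the union elementwise, I would use the fact that $\mathfrak{F}$ is closed under the relevant closure operations applied to its members: if $\mathcal{R} \in \mathfrak{F}$, then its symmetric closure $\mathcal{R} \cup \mathcal{R}^{-1}$ and its transitive closure both still lie inside $\mathcal{S}$ (because $\mathcal{S}$ is already an equivalence relation, so it contains these closures) and both still satisfy (\ref{eq7}) — for the symmetric closure this follows because taking $(a,b) \mapsto (b,a)$ commutes with the preservation condition, and for the transitive closure because if a chain $a = c_0, c_1, \dots, c_\ell = b$ witnesses $(a,b)$ in the transitive closure, then applying $F_u$ entrywise gives a chain witnessing $(F_u a, F_u b)$, and symmetrically for the converse direction. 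Therefore the equivalence closure of $\mathcal{R}^*$ is again a member of $\mathfrak{F}$; by maximality of $\mathcal{R}^*$ it must equal $\mathcal{R}^*$, so $\mathcal{R}^*$ is already an equivalence relation.

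The main obstacle I anticipate is making the argument for (\ref{eq7}) under unions and closures genuinely clean rather than a thicket of quantifiers: the biconditional form of (\ref{eq7}) — requiring \emph{both} directions simultaneously — means one cannot simply say ``intersections/unions of relations with a preservation property have that property,'' and one must be careful that the ``backward'' implication, which is what rules out merging too many states, survives the union. Phrasing (\ref{eq7}) as the fixed-point equation $(F_u \times F_u)^{-1}(\mathcal{R}) = \mathcal{R}$ sidesteps this, since preimage is exact (it commutes with union, intersection, and complement), so this reformulation is really the crux of the whole proof; everything else is routine lattice-theoretic bookkeeping. An alternative, slightly less slick route would be to invoke the general Knaster–Tarski fixed-point theorem on the lattice of subrelations of $\mathcal{S}$ with the monotone operator $\mathcal{R} \mapsto \mathcal{R} \cap (F_{u_1} \times F_{u_1})^{-1}(\mathcal{R}) \cap \cdots$, but the direct union argument is more self-contained and is what I would write.
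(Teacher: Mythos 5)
Your overall skeleton---take the union of all admissible relations, then combine maximality with closure of the family under symmetric and transitive closure---is essentially the paper's own argument (the paper works with $\widetilde{\mathcal{R}}^{-1}$ and $\widetilde{\mathcal{R}} \circ \widetilde{\mathcal{R}}$ in place of your closures, which amounts to the same thing). The genuine gap lies in the step you yourself call the crux: recasting (\ref{eq7}) as the \emph{equality} $(F_u \times F_u)^{-1}(\mathcal{R}) = \mathcal{R}$ for every $u$. Under that reading two of your claims are false. First, the identity relation $\mathcal{R}_{\mathrm{id}} = \{(a,a) \colon a \in \Delta_N\}$ need not lie in $\mathfrak{F}$: $(F_u \times F_u)^{-1}(\mathcal{R}_{\mathrm{id}}) = \{(a,b) \colon F_u a = F_u b\}$ strictly contains $\mathcal{R}_{\mathrm{id}}$ whenever some $F_u$ is non-injective, which is the generic situation for a BCN; so your nonemptiness and reflexivity steps collapse. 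Second, the transitive closure does not inherit the ``backward'' direction: a chain witnessing $(F_u a, F_u b)$ in the transitive closure passes through intermediate states that need not lie in the image of $F_u$, so it cannot be pulled back, and ``symmetrically for the converse direction'' is not a proof. Indeed, under the equality reading the proposition itself is false (take every $F_u$ constant and $\mathcal{S}$ the identity relation with $N \geq 2$: the only admissible relation is the empty one, which is not reflexive), so no proof could succeed; your proposal is also internally inconsistent, since it asserts both the equality reformulation and that the diagonal ``clearly'' belongs to $\mathfrak{F}$.

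The repair is to read (\ref{eq7}) the way the paper actually uses it in the proofs of Theorem~\ref{thm1}, Proposition~\ref{prop1} and Theorem~\ref{thm2}: only the forward implication is operative, i.e.\ $(a,b) \in \mathcal{R}$ implies $(F \ltimes u \ltimes a, F \ltimes u \ltimes b) \in \mathcal{R}$ for all $u$, equivalently $\mathcal{R} \subseteq (F_u \times F_u)^{-1}(\mathcal{R})$. This invariance is exactly what determinism of the quotient requires; the reverse implication is neither needed nor satisfied by $\mathcal{R}_{\mathrm{id}}$. With the inclusion in place of your equality everything you wrote becomes routine and correct: the inclusion is preserved under arbitrary unions (no exactness of the preimage is needed), the identity relation is invariant, and the symmetric and transitive closures of an invariant subrelation of $\mathcal{S}$ are again invariant subrelations of $\mathcal{S}$, so maximality finishes the proof exactly as you intended and as the paper does.
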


\begin{proof}
Note that the identity relation $\mathcal{R}_{\text{id}} = \{(a,a)
\colon a \in \Delta_N \}$ satisfies (\ref{eq7}) and
$\mathcal{R}_{\text{id}} \subseteq \mathcal{S}$ (since $\mathcal{S}$
is reflexive). Also note that if two relations $\mathcal{R}_1
\subseteq \mathcal{S}$ and $\mathcal{R}_2 \subseteq \mathcal{S}$
both satisfy property (\ref{eq7}), then the same is true for their
union $\mathcal{R}_1 \cup \mathcal{R}_2$. The first statement
follows immediately.

The maximal element $\widetilde{\mathcal{R}}$ is reflexive since it
contains the identity relation $\mathcal{R}_{\text{id}}$. To show
the symmetry and transitivity of $\widetilde{\mathcal{R}}$, consider
the inverse relation $\widetilde{\mathcal{R}}^{-1} = \{(b,a) \colon
(a,b) \in \widetilde{\mathcal{R}}\}$ and the composition
$\widetilde{\mathcal{R}} \circ \widetilde{\mathcal{R}} = \{(a,c)
\colon \text{there exists $b \in \Delta_N$ such that $(a,b) \in
\widetilde{\mathcal{R}}$ and $(b,$}$ $c) \in \widetilde{\mathcal{R}}
\}$. It is easy to see that both $\widetilde{\mathcal{R}}^{-1}$ and
$\widetilde{\mathcal{R}} \circ \widetilde{\mathcal{R}}$ satisfy
(\ref{eq7}), and are contained in $\mathcal{S}$ since
$\widetilde{\mathcal{R}} \subseteq \mathcal{S}$ and $\mathcal{S}$ is
symmetric and transitive. Hence, $\widetilde{\mathcal{R}}$ contains
$\widetilde{\mathcal{R}}^{-1}$ and $\widetilde{\mathcal{R}} \circ
\widetilde{\mathcal{R}}$, implying that $\widetilde{\mathcal{R}}$ is
symmetric and transitive. The second statement is proved.
\end{proof}

The following theorem suggests a way of computing such an
equivalence relation.

\begin{theorem} \label{thm2} \itshape
Let $F \in \mathcal{L}^{N \times NM}$, and let $\mathcal{S}$ be an
equivalence relation on $\Delta_N$. For each $u \in \Delta_M$ define
a relation $\mathcal{S}_u$ on $\Delta_N$ by: $(a,a') \in
\mathcal{S}_u$ if and only if $a' = F \ltimes u \ltimes a$. Define a
sequence of relations $\mathcal{R}_k$ by
\begin{equation*}
\mathcal{R}_1 = \mathcal{S} \;\; \text{and} \;\; \mathcal{R}_{k+1} =
\bigg(\bigcap_{u \in \Delta_M} (\mathcal{S}_u^{-1} \circ
\mathcal{R}_k \circ \mathcal{S}_u)\bigg) \cap \mathcal{R}_k .
\end{equation*}
Then:
\begin{enumerate} [(a)]
\item The sequence of relations $\mathcal{R}_1, \mathcal{R}_2,
\ldots, \mathcal{R}_k , \ldots$ satisfies $\mathcal{R}_1 \supseteq
\mathcal{R}_2 \supseteq \cdots \supseteq \mathcal{R}_k \supseteq
\cdots$.

\item There is an integer $k^\ast$ such that $\mathcal{R}_{k^\ast +
1} = \mathcal{R}_{k^\ast}$.

\item $\mathcal{R}_{k^\ast}$ is the maximal equivalence relation on
$\Delta_N$ such that $\mathcal{R}_{k^\ast} \subseteq \mathcal{S}$
and property (\ref{eq7}) holds.
\end{enumerate}
\end{theorem}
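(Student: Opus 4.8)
The plan is to prove the three parts in the order they are stated, since each relies on the previous one. For part (a), I would argue that the map $\mathcal{R} \mapsto \bigl(\bigcap_{u \in \Delta_M} (\mathcal{S}_u^{-1} \circ \mathcal{R} \circ \mathcal{S}_u)\bigr) \cap \mathcal{R}$ is monotone, but actually all that is needed is the trivial observation that $\mathcal{R}_{k+1} \subseteq \mathcal{R}_k$ by construction (the definition intersects with $\mathcal{R}_k$), so (a) is immediate by induction. Part (b) then follows because $\mathcal{R}_1 = \mathcal{S}$ is a finite set (a subset of $\Delta_N \times \Delta_N$, which has $N^2$ elements), and a strictly decreasing chain of finite sets must stabilize after finitely many steps; so there is a least $k^\ast$ with $\mathcal{R}_{k^\ast + 1} = \mathcal{R}_{k^\ast}$, and one checks that $\mathcal{R}_k = \mathcal{R}_{k^\ast}$ for all $k \geq k^\ast$.

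The substance is in part (c), which I would split into three claims: (i) $\mathcal{R}_{k^\ast}$ satisfies (\ref{eq7}); (ii) $\mathcal{R}_{k^\ast}$ is an equivalence relation; (iii) $\mathcal{R}_{k^\ast}$ is maximal among equivalence relations (indeed among all relations) contained in $\mathcal{S}$ satisfying (\ref{eq7}). For (i): since $\mathcal{R}_{k^\ast}$ is a fixed point, $\mathcal{R}_{k^\ast} = \mathcal{R}_{k^\ast + 1} \subseteq \mathcal{S}_u^{-1} \circ \mathcal{R}_{k^\ast} \circ \mathcal{S}_u$ for every $u \in \Delta_M$. I would unpack this: each $\mathcal{S}_u$ is a (total, single-valued) function $a \mapsto F \ltimes u \ltimes a$, so $(a,b) \in \mathcal{S}_u^{-1} \circ \mathcal{R}_{k^\ast} \circ \mathcal{S}_u$ means exactly $(F \ltimes u \ltimes a, F \ltimes u \ltimes b) \in \mathcal{R}_{k^\ast}$. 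Hence the fixed-point condition says: $(a,b) \in \mathcal{R}_{k^\ast} \Rightarrow (F \ltimes u \ltimes a, F \ltimes u \ltimes b) \in \mathcal{R}_{k^\ast}$ for all $u$. The reverse implication in (\ref{eq7}) is not automatic, so I would have to derive it — the cleanest route is to first establish (ii) and (iii), then invoke Proposition~\ref{prop1}: the unique maximal relation $\widetilde{\mathcal{R}}$ in $\mathcal{S}$ with (\ref{eq7}) is an equivalence relation, and if I can show $\mathcal{R}_{k^\ast} = \widetilde{\mathcal{R}}$, then (\ref{eq7}) holds for $\mathcal{R}_{k^\ast}$ because it holds for $\widetilde{\mathcal{R}}$. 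So the real targets become: $\mathcal{R}_{k^\ast} \subseteq \mathcal{S}$ (clear from (a), since $\mathcal{R}_{k^\ast} \subseteq \mathcal{R}_1 = \mathcal{S}$), and $\widetilde{\mathcal{R}} \subseteq \mathcal{R}_{k^\ast}$.

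For the inclusion $\widetilde{\mathcal{R}} \subseteq \mathcal{R}_{k^\ast}$, I would show by induction on $k$ that $\widetilde{\mathcal{R}} \subseteq \mathcal{R}_k$ for all $k$. The base case $k = 1$ is $\widetilde{\mathcal{R}} \subseteq \mathcal{S}$. For the inductive step, assume $\widetilde{\mathcal{R}} \subseteq \mathcal{R}_k$; I need $\widetilde{\mathcal{R}} \subseteq \bigcap_u (\mathcal{S}_u^{-1} \circ \mathcal{R}_k \circ \mathcal{S}_u)$. Take $(a,b) \in \widetilde{\mathcal{R}}$ and any $u$; since $\widetilde{\mathcal{R}}$ satisfies (\ref{eq7}) (the forward direction), $(F \ltimes u \ltimes a, F \ltimes u \ltimes b) \in \widetilde{\mathcal{R}} \subseteq \mathcal{R}_k$, which by the functional description of $\mathcal{S}_u$ is precisely $(a,b) \in \mathcal{S}_u^{-1} \circ \mathcal{R}_k \circ \mathcal{S}_u$. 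Combined with $\widetilde{\mathcal{R}} \subseteq \mathcal{R}_k$ this gives $\widetilde{\mathcal{R}} \subseteq \mathcal{R}_{k+1}$. Taking $k = k^\ast$ yields $\widetilde{\mathcal{R}} \subseteq \mathcal{R}_{k^\ast}$. Since also $\mathcal{R}_{k^\ast} \subseteq \mathcal{S}$ and, by the fixed-point argument above, $\mathcal{R}_{k^\ast}$ satisfies the forward implication of (\ref{eq7}) — which, I should note, is all that the proof of Proposition~\ref{prop1} actually uses about relations "satisfying (\ref{eq7})" in establishing maximality, or alternatively I can observe directly that the forward implication plus $\mathcal{R}_{k^\ast}$ being contained in the symmetric $\mathcal{S}$ already forces $\mathcal{R}_{k^\ast} \subseteq \widetilde{\mathcal{R}}$ by maximality of $\widetilde{\mathcal{R}}$ — I get $\mathcal{R}_{k^\ast} = \widetilde{\mathcal{R}}$, and all of (c) follows.

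The step I expect to be the main obstacle is getting the logical bookkeeping around the biconditional in (\ref{eq7}) exactly right: the iteration only ever enforces the \emph{forward} implication, so the reverse implication must come for free from the fact that the ambient $\mathcal{S}$ is an equivalence relation together with the maximality characterization in Proposition~\ref{prop1} — I need to be careful that Proposition~\ref{prop1} (or a small variant of its proof) really does deliver that $\widetilde{\mathcal{R}}$, the maximal relation satisfying (\ref{eq7}), coincides with the maximal relation satisfying merely the forward implication, since it is the latter that the algorithm computes. Verifying that these two notions of maximal relation agree (because any relation $\mathcal{R} \subseteq \mathcal{S}$ closed under the forward implication generates, via symmetric–transitive closure inside $\mathcal{S}$, a relation still closed under the forward implication, hence still inside $\widetilde{\mathcal{R}}$) is the delicate point; everything else is routine induction and the finiteness argument.
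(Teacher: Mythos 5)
Your proposal is correct and follows essentially the same route as the paper: parts (a) and (b) by construction and finiteness, and for (c) the same two ingredients the paper uses --- the fixed-point inclusion $\mathcal{R}_{k^\ast} = \mathcal{R}_{k^\ast+1} \subseteq \mathcal{S}_u^{-1} \circ \mathcal{R}_{k^\ast} \circ \mathcal{S}_u$ unpacked via the functionality of $\mathcal{S}_u$, plus the induction showing that any relation contained in $\mathcal{S}$ and satisfying (\ref{eq7}) is contained in every $\mathcal{R}_k$ --- combined with Proposition~\ref{prop1}. The only point where you go beyond the paper is your concern about the reverse implication in (\ref{eq7}): the paper's own proof of (c), like its proof of Proposition~\ref{prop1}, verifies only the forward direction and declares (\ref{eq7}) established, so the condition is evidently meant as the one-directional closure property of Remark~\ref{rmk2}, and under that reading your extra detour through $\widetilde{\mathcal{R}}$ is unnecessary (though your observation that the literal biconditional is not what the iteration computes is well taken).
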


\begin{proof}
Part (a) is quite trivial. Part (b) follows from (a) and the
finiteness of each $\mathcal{R}_k$.

We turn to the proof of (c). By Proposition~\ref{prop1}, it suffices
to show that $\mathcal{R}_{k^\ast} \subseteq \Delta_N \times
\Delta_N$ is the maximal relation satisfying $\mathcal{R}_{k^\ast}
\subseteq \mathcal{S}$ and condition (\ref{eq7}). The relation
$\mathcal{R}_{k^\ast}$ is clearly a subset of $\mathcal{S}$. To show
that (\ref{eq7}) holds true, suppose that $(a,b) \in
\mathcal{R}_{k^\ast}$ and $u \in \Delta_M$. Since
$\mathcal{R}_{k^\ast} = \mathcal{R}_{k^\ast + 1} \subseteq
\mathcal{S}_u^{-1} \circ \mathcal{R}_{k^\ast} \circ \mathcal{S}_u$,
there exist $a',b' \in \Delta_N$ such that $(a,a') \in
\mathcal{S}_u$, $(b', b) \in \mathcal{S}_u^{-1}$, and $(a',b') \in
\mathcal{R}_{k^\ast}$. It follows from the definition of
$\mathcal{S}_u$ that $a' = F \ltimes u \ltimes a$ and $b' = F
\ltimes u \ltimes b$. Hence, $(F \ltimes u \ltimes a, F \ltimes u
\ltimes b) \in \mathcal{R}_{k^\ast}$.

To prove the maximality of $\mathcal{R}_{k^\ast}$, let $\mathcal{R}
\subseteq \Delta_N \times \Delta_N$ be another relation which is
contained in $\mathcal{S}$ and satisfies (\ref{eq7}). We claim that
$\mathcal{R} \subseteq \mathcal{R}_k$ for all $k$. The case $k =
k^\ast$ completes the proof. We shall use induction on $k$. The case
$k = 1$ is trivial, so we take $k > 1$ and assume that $\mathcal{R}
\subseteq \mathcal{R}_{k-1}$. Let $(a,b) \in \mathcal{R}$. Then for
any $u \in \Delta_M$, we have $(F \ltimes u \ltimes a, F \ltimes u
\ltimes b) \in \mathcal{R} \subseteq \mathcal{R}_{k-1}$. By the
definition of $\mathcal{S}_u$, it follows that $(a, F \ltimes u
\ltimes a) \in \mathcal{S}_u$ and $(F \ltimes u \ltimes b, b) \in
\mathcal{S}_u^{-1}$. Hence, $(a,b) \in \mathcal{S}_u^{-1} \circ
\mathcal{R}_{k-1} \circ \mathcal{S}_u$, and consequently $(a,b) \in
\mathcal{R}_k$ since $u$ was arbitrary. This shows that $\mathcal{R}
\subseteq \mathcal{R}_k$, and our claim follows.
\end{proof}

For applications, it is convenient to reformulate Theorem~\ref{thm2}
in terms of $(0,1)$-matrices. Recall that a relation $\mathcal{R}$
on $\Delta_N$ can be represented by an $N \times N$ matrix, whose
$(i,j)$-entry is $1$ if $(\delta_N^i, \delta_N^j) \in \mathcal{R}$
and $0$ otherwise. So if $A_{\mathcal{R}}$ is the matrix
representing $\mathcal{R}$, then the inverse relation
$\mathcal{R}^{-1}$ has $A_{\mathcal{R}}^\top$ as the matrix
representation. Moreover, if $\mathcal{R}'$ is another relation on
$\Delta_N$ represented by $A_{\mathcal{R}'}$, then the matrices
representing $\mathcal{R} \cap \mathcal{R}'$ and $\mathcal{R}' \circ
\mathcal{R}$ are $A_{\mathcal{R}} \wedge A_{\mathcal{R}'}$ and
$A_{\mathcal{R}} \odot A_{\mathcal{R}'}$ (see, e.g., \cite[Section
9.3]{rosenbook}). Note that if $\mathcal{S}_u$ is the relation
defined in Theorem~\ref{thm2} and if $u = \delta_M^k$, then
\begin{equation*}
(\delta_N^i, \delta_N^j) \in \mathcal{S}_u \Longleftrightarrow F_k
\delta_N^i = \delta_N^j \Longleftrightarrow (F_k)_{ji} = 1 ,
\end{equation*}
where $F_k = F \ltimes \delta_M^k$, and thus $F_k^\top$ is the
matrix representing $\mathcal{S}_u$. From these facts and
Theorem~\ref{thm2}, the following corollary follows immediately.

\begin{corollary} \label{cor1} \itshape
Suppose that $\mathcal{S}$ is an equivalence relation on $\Delta_N$
represented by a matrix $A_{\mathcal{S}}$, and suppose that $F \in
\mathcal{L}^{N \times NM}$. For each $1 \leq i \leq M$, let $F_i$ be
the matrix $F_i = F \ltimes \delta_M^i$. Define a sequence of
$(0,1)$-matrices by
\begin{multline*}
A_1 = A_{\mathcal{S}} \;\, \text{and} \;\, A_{k+1} = A_k \wedge
(F_1^\top \odot A_k \odot F_1) \wedge \cdots \\
\wedge (F_M^\top \odot A_k \odot F_M) .
\end{multline*}
Then there is an integer $k^\ast$ such that $A_{k^\ast + 1} =
A_{k^\ast}$, and $A_{k^\ast}$ is the matrix representing the maximal
equivalence relation on $\Delta_N$ that is contained in
$\mathcal{S}$ and satisfies property (\ref{eq7}).
\end{corollary}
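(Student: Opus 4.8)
The plan is to obtain Corollary~\ref{cor1} as a direct transcription of Theorem~\ref{thm2} into the language of $(0,1)$-matrices, using the dictionary between relations and their matrix representations that has already been assembled in the paragraph preceding the corollary. Concretely, I would first record the correspondence: if $\mathcal{R}$ and $\mathcal{R}'$ are relations on $\Delta_N$ represented by $A_{\mathcal{R}}$ and $A_{\mathcal{R}'}$, then $\mathcal{R}^{-1}$ is represented by $A_{\mathcal{R}}^\top$, the intersection $\mathcal{R} \cap \mathcal{R}'$ by $A_{\mathcal{R}} \wedge A_{\mathcal{R}'}$, and the composition $\mathcal{R}' \circ \mathcal{R}$ by $A_{\mathcal{R}} \odot A_{\mathcal{R}'}$ (the last identity being the standard fact cited from \cite[Section 9.3]{rosenbook}). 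I would also note, as the excerpt already does, that for $u = \delta_M^k$ the relation $\mathcal{S}_u$ from Theorem~\ref{thm2} is represented by $F_k^\top$, since $(\delta_N^i,\delta_N^j) \in \mathcal{S}_u$ iff $F_k \delta_N^i = \delta_N^j$ iff $(F_k)_{ji} = 1$.

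Next I would verify by induction on $k$ that the matrix sequence $A_k$ defined in the corollary is exactly the matrix representation of the relation sequence $\mathcal{R}_k$ from Theorem~\ref{thm2}. The base case $A_1 = A_{\mathcal{S}}$ matches $\mathcal{R}_1 = \mathcal{S}$ by hypothesis. For the inductive step, one composes the dictionary entries: the relation $\mathcal{S}_u^{-1} \circ \mathcal{R}_k \circ \mathcal{S}_u$ — read as $\mathcal{S}_u^{-1}$ composed with ($\mathcal{R}_k \circ \mathcal{S}_u$) — is represented, applying the composition rule twice and the inverse rule once, by $F_k^\top \odot A_k \odot F_k$ (here I must be slightly careful about the order in which the matrices of a composite appear, but the paper's convention that $\mathcal{R}' \circ \mathcal{R}$ has matrix $A_{\mathcal{R}} \odot A_{\mathcal{R}'}$ makes the three factors appear in the order $F_k^\top$, $A_k$, $F_k$, which is symmetric and hence unambiguous). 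Intersecting over all $u = \delta_M^k$, $1 \le k \le M$, and then intersecting with $\mathcal{R}_k$ itself corresponds to taking the meet $\wedge$ of all these matrices together with $A_k$, which is precisely the defining formula for $A_{k+1}$. Thus $A_k$ represents $\mathcal{R}_k$ for every $k$.

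With that identification in hand, the three conclusions of the corollary follow immediately from the corresponding parts of Theorem~\ref{thm2}: stabilization of the relation sequence, $\mathcal{R}_{k^\ast+1} = \mathcal{R}_{k^\ast}$, is equivalent to $A_{k^\ast+1} = A_{k^\ast}$ because a relation is determined by its matrix; and $\mathcal{R}_{k^\ast}$ being the maximal equivalence relation contained in $\mathcal{S}$ and satisfying (\ref{eq7}) translates verbatim into the statement that $A_{k^\ast}$ is the matrix representing that maximal equivalence relation.

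The proof is almost entirely bookkeeping, so there is no deep obstacle; the one place to be careful is the bookkeeping itself — specifically, matching the left-to-right order of factors in $F_k^\top \odot A_k \odot F_k$ against the order in the composite $\mathcal{S}_u^{-1} \circ \mathcal{R}_k \circ \mathcal{S}_u$ under the paper's composition convention, and making sure that the "inner" intersection over $u$ in the definition of $\mathcal{R}_{k+1}$ is correctly paired with the extra $\cap \mathcal{R}_k$ so that it matches the single chained meet $A_k \wedge (F_1^\top \odot A_k \odot F_1) \wedge \cdots \wedge (F_M^\top \odot A_k \odot F_M)$. Once that correspondence is pinned down, invoking Theorem~\ref{thm2}(a)--(c) closes the argument.
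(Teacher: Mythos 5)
Your proposal is correct and follows essentially the same route as the paper, which likewise establishes the relation-to-matrix dictionary (inverse $\mapsto$ transpose, intersection $\mapsto$ meet, composition $\mathcal{R}'\circ\mathcal{R}\mapsto A_{\mathcal{R}}\odot A_{\mathcal{R}'}$, and $\mathcal{S}_{\delta_M^k}\mapsto F_k^\top$) and then declares the corollary an immediate translation of Theorem~\ref{thm2}. One small caveat: the identification of $\mathcal{S}_u^{-1}\circ\mathcal{R}_k\circ\mathcal{S}_u$ with $F_k^\top\odot A_k\odot F_k$ is forced by the paper's reversed composition convention rather than by any symmetry of the expression (the alternative ordering would give the different matrix $F_k\odot A_k\odot F_k^\top$), but you do apply the convention correctly.
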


\begin{example} \label{eg3} \normalfont
Consider again the BCN in Example~\ref{eg1}. If we let $\mathcal{S}$
be the equivalence relation induced by the partition $\{
\{\delta_8^1\}, \, \{\delta_8^2, \delta_8^3, \delta_8^4\}, \,
\{\delta_8^5, \delta_8^6, \delta_8^7, \delta_8^8\} \}$, then
\begin{equation*}
A_1 =
\begin{bmatrix}
1 \; & 0 \; & 0 \\
0 \; & J_3 \; & 0 \\
0 \; & 0 \; & J_4
\end{bmatrix} ,
\end{equation*}
and a short computation yields
\begin{equation*}
A_2 = A_3 =
\begin{bmatrix}
1 \; & 0 \; & 0 \; & 0 \\
0 \; & J_2 \; & 0 \; & 0 \\
0 \; & 0 \; & 1 \; & 0 \\
0 \; & 0 \; & 0 \; & J_4
\end{bmatrix} ,
\end{equation*}
which is exactly the matrix representing the relation
given in Example~\ref{eg1}. So the relation $\mathcal{R}$ presented
in Example~\ref{eg1} is the maximal equivalence relation contained
in $\mathcal{S}$ and satisfying property (\ref{eq7}).
\end{example}

\section{Control design via quotients}

\label{sec4}

This section discusses the application of quotient systems for
control design. We consider two typical control problems in BCNs and
show how these problems can be solved through the use of a quotient
Boolean system.

\subsection{Stabilization}

\label{sec4.1}

Consider a BCN $\Sigma$ as given in (\ref{eq3}). Let $\mathcal{M}
\subseteq \Delta_N$ be a target set of states. We say that $\Sigma$
is \textit{stabilizable} to $\mathcal{M}$ if for every $x(0) \in
\Delta_N$ there exists a control sequence $\{u(0), u(1), u(2),
\ldots \}$, with $u(i) \in \Delta_M$, and a positive integer $\tau$
such that $x(t) \in \mathcal{M}$ for all $t \geq \tau$ (see, e.g.,
\cite{chengbook}). The following result shows that, by defining the
equivalence relation appropriately, we can easily obtain a
stabilizing controller for $\Sigma$ on the basis of a stabilizer for
its quotient system.

\begin{proposition} \label{prop2} \itshape
Consider a BCN $\Sigma$ as given in (\ref{eq3}). Let $\mathcal{M}
\subseteq \Delta_N$ and let $\mathcal{S}$ be the equivalence
relation on $\Delta_N$ determined by the partition $\{\,
\mathcal{M}, \: \Delta_N - \mathcal{M} \,\}$. Suppose that
$\mathcal{R}$ is an equivalence relation on $\Delta_N$ induced by a
matrix $C \in \mathcal{L}^{\widetilde{N} \times N}$ of full row
rank, $\mathcal{R} \subseteq \mathcal{S}$, and condition (\ref{eq7})
holds. Suppose $\Sigma_{\mathcal{R}}$ is defined as in
Theorem~\ref{thm1}. If $\Sigma_{\mathcal{R}}$ can be stabilized to
the set $\mathcal{M}_{\mathcal{R}} = \{ \, Cx \colon x \in
\mathcal{M} \, \}$ via a feedback law $(x_{\mathcal{R}}, t) \mapsto
u(x_\mathcal{R}, t)$, then $\Sigma$ can be stabilized to
$\mathcal{M}$ using the feedback law $(x, t) \mapsto u(Cx, t)$.
\end{proposition}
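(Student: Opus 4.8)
The plan is to exploit the trajectory-level correspondence between $\Sigma$ and $\Sigma_{\mathcal{R}}$ furnished by Theorem~\ref{thm1}(b), together with the compatibility between $\mathcal{M}$ and $\mathcal{M}_{\mathcal{R}}$ that the inclusion $\mathcal{R} \subseteq \mathcal{S}$ guarantees.

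First I would record the key observation that, for every $x \in \Delta_N$, one has $x \in \mathcal{M}$ if and only if $Cx \in \mathcal{M}_{\mathcal{R}}$. The ``only if'' part is immediate from the definition of $\mathcal{M}_{\mathcal{R}}$. For the ``if'' part, suppose $Cx = Cx'$ for some $x' \in \mathcal{M}$; then $(x,x') \in \mathcal{R}$ by (\ref{eq4}), hence $(x,x') \in \mathcal{S}$ since $\mathcal{R} \subseteq \mathcal{S}$, which means $x$ and $x'$ lie in the same block of the partition $\{\mathcal{M}, \Delta_N - \mathcal{M}\}$; as $x' \in \mathcal{M}$, this forces $x \in \mathcal{M}$. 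This is the only place the hypothesis $\mathcal{R} \subseteq \mathcal{S}$ enters, and I expect it to be the conceptual crux of the argument; everything else is bookkeeping.

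Next I would fix an arbitrary $x(0) \in \Delta_N$ and consider the closed-loop trajectory $x(0), x(1), x(2), \ldots$ of $\Sigma$ generated by the lifted feedback $u(t) = u(Cx(t), t)$ (this trajectory, and the corresponding control sequence, are well defined by recursion since $\Sigma$ is deterministic). I claim that $Cx(0), Cx(1), Cx(2), \ldots$ is precisely the closed-loop trajectory of $\Sigma_{\mathcal{R}}$ that starts at $x_{\mathcal{R}}(0) = Cx(0)$ and is driven by the feedback $(x_{\mathcal{R}}, t) \mapsto u(x_{\mathcal{R}}, t)$. This is proved by induction on $t$: the base case is trivial, and for the step, assuming $x_{\mathcal{R}}(t) = Cx(t)$, the input applied to $\Sigma$ at time $t$ equals $u(Cx(t), t) = u(x_{\mathcal{R}}(t), t)$, which is exactly the input the quotient feedback applies to $\Sigma_{\mathcal{R}}$ at time $t$; since this common input steers $\Sigma$ from $x(t)$ to $x(t+1)$, Theorem~\ref{thm1}(b) says it steers $\Sigma_{\mathcal{R}}$ from $Cx(t)$ to $Cx(t+1)$, so $x_{\mathcal{R}}(t+1) = Cx(t+1)$, which closes the induction (uniqueness of the quotient trajectory is ensured by Theorem~\ref{thm1}(a)).

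Finally I would assemble the pieces. By hypothesis the feedback stabilizes $\Sigma_{\mathcal{R}}$ to $\mathcal{M}_{\mathcal{R}}$, so applied to the initial state $Cx(0)$ there is a positive integer $\tau$ with $x_{\mathcal{R}}(t) \in \mathcal{M}_{\mathcal{R}}$ for all $t \geq \tau$. By the induction above, $Cx(t) = x_{\mathcal{R}}(t) \in \mathcal{M}_{\mathcal{R}}$ for all $t \geq \tau$, and then the equivalence established in the first step yields $x(t) \in \mathcal{M}$ for all $t \geq \tau$. Since $x(0)$ was arbitrary, the feedback $(x, t) \mapsto u(Cx, t)$ stabilizes $\Sigma$ to $\mathcal{M}$, as claimed. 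The main thing to be careful about is simply that the lifted feedback produces an admissible control sequence for $\Sigma$ — which it does, because the closed loop is deterministic and generated step by step.
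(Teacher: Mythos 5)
Your proof is correct and follows essentially the same route as the paper's: establish by induction that the lifted feedback makes the closed-loop trajectories satisfy $Cx(t)=x_{\mathcal{R}}(t)$, then use $\mathcal{R}\subseteq\mathcal{S}$ to pull membership in $\mathcal{M}_{\mathcal{R}}$ back to membership in $\mathcal{M}$. The only (harmless) difference is that your induction step invokes the forward direction of Theorem~\ref{thm1}(b) together with determinism of $\Sigma_{\mathcal{R}}$, whereas the paper uses the converse direction of Theorem~\ref{thm1}(b) combined with condition (\ref{eq7}); both are valid, and yours is marginally more direct.
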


\begin{proof}
We first show that if the initial states of $\Sigma$ and
$\Sigma_{\mathcal{R}}$ satisfy $C x(0) = x_{\mathcal{R}}(0)$, then
the feedback laws $(x,t) \mapsto u(Cx, t)$ and $(x_{\mathcal{R}}, t)
\mapsto u(x_{\mathcal{R}}, t)$ generate the same input sequence, and
the trajectories satisfy $Cx(t) = x_{\mathcal{R}}(t)$ for all $t =
0,1,2, \ldots$. It is clearly true that the two feedback laws
generate the same input, say $u_0$, at $t = 0$. By the second part
of Theorem~\ref{thm1}(b), there is a one-step transition of $\Sigma$
from some $a \in \Delta_N$ to some $a' \in \Delta_N$ with $Ca =
x_{\mathcal{R}}(0)$ and $C a' = x_{\mathcal{R}}(1)$, under this
input $u_0$. Since $Cx(0) = Ca$, it follows from (\ref{eq4}) that
$(x(0), a) \in \mathcal{R}$, and then by (\ref{eq7}) we have $(x(1),
a') \in \mathcal{R}$. Thus $C x(1) = Ca' = x_{\mathcal{R}}(1)$ again
by (\ref{eq4}). The fact we want now follows by induction.

Now we can prove the proposition. Assume that the feedback laws
$(x,t) \mapsto u(Cx, t)$ and $(x_{\mathcal{R}}, t) \mapsto
u(x_{\mathcal{R}}, t)$ are applied to $\Sigma$ and
$\Sigma_{\mathcal{R}}$, respectively. Let $p \in \Delta_N$ and let
$q = C p$. Then there is a $\tau$ such that the trajectory of
$\Sigma_{\mathcal{R}}$ with $x_{\mathcal{R}}(0) = q$ satisfies
$x_{\mathcal{R}}(t) \in \mathcal{M}_{\mathcal{R}}$ for all $t \geq
\tau$. Since the trajectory of $\Sigma$ with $x(0) = p$ always
satisfies $C x(t) = x_{\mathcal{R}}(t)$, to each $t \geq \tau$ there
corresponds some $b \in \mathcal{M}$ such that $C x(t) = C b$, and
hence $(x(t), b) \in \mathcal{R} \subseteq \mathcal{S}$. This forces
$x(t) \in \mathcal{M}$ whenever $t \geq \tau$, since $\mathcal{S}$
is the equivalence relation yielded by the partition $\{\,
\mathcal{M}, \: \Delta_N - \mathcal{M} \,\}$. Since $p$ was
arbitrary, we conclude that stabilization of $\Sigma$ to
$\mathcal{M}$ is achieved, via the feedback law $(x,t) \mapsto u(Cx,
t)$.
\end{proof}

\begin{remark} \label{rmk3} \normalfont
Note that in Proposition~\ref{prop2} we do not assume $\mathcal{R}$
to be maximal, although that will be the case in most applications
of the proposition. A similar remark applies to
Proposition~\ref{prop3} below.
\end{remark}

\subsection{Optimal control}

\label{sec4.2}

As another example of application we consider the following
finite-horizon optimal control problem, introduced in
\cite{fornasini2014}.

\begin{problem} \label{plm1} \normalfont
Consider a BCN $\Sigma$ as in (\ref{eq3}). Given an initial state
$x_0$ and a finite time horizon $T \in \mathbb{Z}^+$, find a control
sequence that minimizes the cost function
\begin{equation} \label{eq8a}
J = \sum_{t=0}^{T-1} l(u(t), x(t)) + g(x(T)) ,
\end{equation}
where $l(u, x)$ and $g(x)$ are functions defined on $\Delta_M \times
\Delta_N$ and $\Delta_N$, respectively.
\end{problem}

We show that the solution to Problem~\ref{plm1} for $\Sigma$ can be
easily derived on the basis of a solution to Problem~\ref{plm1} for
a suitably chosen quotient system. Let $\mathcal{S}$ be the
equivalence relation on $\Delta_N$ given by
\begin{align} \label{eq8}
(x, x') \in \mathcal{S} &\Longleftrightarrow \text{$g(x) = g(x')$
and} \notag \\
& \qquad \;\, \text{$l(u, x) = l(u, x')$ for all $u \in \Delta_M$}.
\end{align}
We observe that, for a matrix $C \in \mathcal{L}^{\widetilde{N}
\times N}$ with full row rank, if the equivalence relation
$\mathcal{R}$ induced by $C$ satisfies $\mathcal{R} \subseteq
\mathcal{S}$, then the following two maps are well defined:
\begin{align}
&l_{\mathcal{R}} \colon \Delta_M \times \Delta_{\widetilde{N}}
\rightarrow \mathbb{R}, \;\; (u, a) \mapsto l(u, x) \notag \\
&\qquad \qquad \qquad \qquad \qquad \quad \text{whenever $a = Cx$},
\label{eq9} \\
&g_{\mathcal{R}} \colon \Delta_{\widetilde{N}}
\rightarrow \mathbb{R}, \;\; \text{$a \mapsto g(x)$ whenever $a =
Cx$}. \label{eq10}
\end{align}
Based on this observation, we can state the following proposition.

\begin{proposition} \label{prop3} \itshape
Let $\Sigma$ be a BCN described by (\ref{eq3}). Suppose that
$\mathcal{S}$ is the equivalence relation on $\Delta_N$ given by
(\ref{eq8}), $\mathcal{R}$ is an equivalence relation on $\Delta_N$
induced by a full row rank logical matrix $C \in
\mathcal{L}^{\widetilde{N} \times N}$, $\mathcal{R} \subseteq
\mathcal{S}$, and (\ref{eq7}) holds. Consider Problem~\ref{plm1}
with given $x_0$, $T$, and $J$. Let $\Sigma_{\mathcal{R}}$ be the
Boolean system constructed in Theorem~\ref{thm1}, and define
$J_{\mathcal{R}} = \sum_{t=0}^{T-1} l_{\mathcal{R}}(u(t),
x_{\mathcal{R}}(t)) + g_{\mathcal{R}}(x_{\mathcal{R}}(T))$, where
$l_{\mathcal{R}}$ and $g_{\mathcal{R}}$ are given by (\ref{eq9}) and
(\ref{eq10}).
\begin{enumerate} [(a)]
\item If $U^\ast = \{u^\ast(0), \ldots, u^\ast(T-1)\}$ is an optimal
control sequence solving Problem~\ref{plm1} with $\Sigma$, $x_0$,
and $J$ replaced by $\Sigma_{\mathcal{R}}$, $x_{\mathcal{R}}^0 = C
x_0$, and $J_{\mathcal{R}}$, respectively, then $U^\ast$ is also an
optimal control for $\Sigma$. Moreover, let $J^\ast$ be the optimal
cost $\min_{u(\cdot)}J$ under the initial condition $x(0) = x_0$ and
let $J_{\mathcal{R}}^\ast$ be the optimal cost $\min_{u(\cdot)}
J_{\mathcal{R}}$ under the condition $x_{\mathcal{R}}(0) = C x_0$.
Then $J^\ast = J_{\mathcal{R}}^\ast$.

\item If $(x_{\mathcal{R}}, t) \mapsto u^\ast (x_{\mathcal{R}}, t)$
is an optimal control policy\footnote{It was shown in
\cite{fornasini2014} that the optimal control input can always be
implemented as a time-varying feedback from the states.} solving
Problem~\ref{plm1} with $\Sigma$ and $J$ replaced by
$\Sigma_{\mathcal{R}}$ and $J_{\mathcal{R}}$, respectively, then the
control policy given by $(x, t) \mapsto u^\ast (Cx, t)$ is an
optimal control policy for $\Sigma$.
\end{enumerate}
\end{proposition}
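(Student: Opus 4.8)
The plan is to reduce everything to the trajectory-correspondence established in the proof of Proposition~\ref{prop2}. The crucial technical fact, which I would state once and reuse, is the following: if the initial states satisfy $Cx(0) = x_{\mathcal{R}}(0)$ and the \emph{same} input sequence $u(0), u(1), \ldots$ is applied to both $\Sigma$ and $\Sigma_{\mathcal{R}}$, then $Cx(t) = x_{\mathcal{R}}(t)$ for all $t$. This is proved by induction exactly as in Proposition~\ref{prop2}: from $Cx(t) = x_{\mathcal{R}}(t)$ and Theorem~\ref{thm1}(b) there is a one-step transition of $\Sigma$ from some $a$ with $Ca = x_{\mathcal{R}}(t)$ to some $a'$ with $Ca' = x_{\mathcal{R}}(t+1)$ under $u(t)$; since $Cx(t) = Ca$ we get $(x(t),a)\in\mathcal{R}$ by (\ref{eq4}), then $(x(t+1),a')\in\mathcal{R}$ by (\ref{eq7}), so $Cx(t+1) = Ca' = x_{\mathcal{R}}(t+1)$.

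Next I would observe that, under this correspondence, the costs coincide: $J_{\mathcal{R}} = J$. Indeed, since $x_{\mathcal{R}}(t) = Cx(t)$ and $x_{\mathcal{R}}^0 = Cx_0$, the definitions (\ref{eq9}) and (\ref{eq10}) give $l_{\mathcal{R}}(u(t), x_{\mathcal{R}}(t)) = l_{\mathcal{R}}(u(t), Cx(t)) = l(u(t), x(t))$ and $g_{\mathcal{R}}(x_{\mathcal{R}}(T)) = g_{\mathcal{R}}(Cx(T)) = g(x(T))$, and summing over $t = 0, \ldots, T-1$ yields $J_{\mathcal{R}} = J$ for any admissible input sequence started from matching initial states. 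This is where one must be careful that $l_{\mathcal{R}}, g_{\mathcal{R}}$ are well defined, but that is exactly guaranteed by the hypothesis $\mathcal{R}\subseteq\mathcal{S}$ with $\mathcal{S}$ as in (\ref{eq8}), which the excerpt already records.

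For part (a): let $U^\ast$ be optimal for $\Sigma_{\mathcal{R}}$ with initial state $Cx_0$, and apply $U^\ast$ to $\Sigma$ started at $x_0$. By the cost identity, $J(U^\ast) = J_{\mathcal{R}}(U^\ast) = J_{\mathcal{R}}^\ast$. Conversely, any input sequence $U$ for $\Sigma$ started at $x_0$, applied to $\Sigma_{\mathcal{R}}$ started at $Cx_0$, satisfies $J(U) = J_{\mathcal{R}}(U) \geq J_{\mathcal{R}}^\ast$; hence $J^\ast = J_{\mathcal{R}}^\ast$ and $U^\ast$ attains it, so $U^\ast$ is optimal for $\Sigma$. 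For part (b): fix any initial state $x_0 \in \Delta_N$ and run the closed-loop systems $\Sigma$ under $(x,t)\mapsto u^\ast(Cx,t)$ and $\Sigma_{\mathcal{R}}$ under $(x_{\mathcal{R}},t)\mapsto u^\ast(x_{\mathcal{R}},t)$, with $x(0)=x_0$ and $x_{\mathcal{R}}(0)=Cx_0$. As in Proposition~\ref{prop2}, the two feedback laws produce the same input at $t=0$, the induction step of the correspondence fact applies, so $Cx(t) = x_{\mathcal{R}}(t)$ for all $t$ and the two closed loops generate the same input sequence; by the cost identity this sequence achieves cost $J_{\mathcal{R}}^\ast = J^\ast$ for $\Sigma$, and by part (a) this is optimal. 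Since $x_0$ was arbitrary, $(x,t)\mapsto u^\ast(Cx,t)$ is an optimal policy for $\Sigma$.

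The main obstacle is conceptual rather than computational: one must resist the temptation to map an optimal trajectory of $\Sigma$ down to $\Sigma_{\mathcal{R}}$ directly (there is no canonical ``lift'' of an arbitrary $\Sigma_{\mathcal{R}}$-trajectory to $\Sigma$ via a single state, only the weaker existential statement in Theorem~\ref{thm1}(b)), and instead argue at the level of \emph{input sequences}, which are shared between the two systems. Once the argument is organized around inputs, every step is a short application of the correspondence fact and the cost identity; the only genuine content is checking the induction, which is identical to the one in Proposition~\ref{prop2}.
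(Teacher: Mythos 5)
Your proposal is correct and follows essentially the same route as the paper: the trajectory correspondence $Cx(t)=x_{\mathcal{R}}(t)$ under a common input sequence (inherited from the first paragraph of the proof of Proposition~\ref{prop2}), the resulting identity $J=J_{\mathcal{R}}$ via (\ref{eq9})--(\ref{eq10}), and the observation that the two feedback laws generate the same inputs. Your part (a) merely spells out the two-sided inequality that the paper compresses into one sentence.
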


\begin{proof}
(a) An argument similar to the first paragraph of the proof of
Proposition~\ref{prop2} shows that, if the initial states of
$\Sigma$ and $\Sigma_{\mathcal{R}}$ satisfy $C x(0) =
x_{\mathcal{R}}(0)$, then for any control sequence $u(0), \ldots,
u(T-1)$, the corresponding trajectories satisfy $C x(t) =
x_{\mathcal{R}}(t)$ for $t = 0, \ldots, T$, and hence $g(x(T)) =
g_{\mathcal{R}}(x_{\mathcal{R}}(T))$ and $l(u(t), x(t)) =
l_{\mathcal{R}} (u(t), x_{\mathcal{R}}(t))$ for each $0 \leq t \leq
T-1$, so that the cost functions $J$ and $J_{\mathcal{R}}$ return
the same value. This implies that if $U^\ast$ minimizes
$J_{\mathcal{R}}$ with the initial condition $x_{\mathcal{R}}(0) = C
x_0$, then it also minimizes $J$ subject to $x(0) = x_0$, and
moreover, the associated optimal costs $J^\ast$ and
$J_{\mathcal{R}}^\ast$ are equal.

Part (b) follows directly from (a) and the fact (explained in the
first paragraph of the proof of Proposition~\ref{prop2}) that the
feedback laws $(x,t) \mapsto u^\ast (Cx, t)$ and $(x_{\mathcal{R}},
t) \mapsto u^\ast (x_{\mathcal{R}}, t)$ generate the same control
sequence whenever $C x(0) = x_{\mathcal{R}}(0)$.
\end{proof}

\begin{example} \label{eg4} \normalfont
To give an intuitive example of the equivalence relation defined by
(\ref{eq8}), suppose that $M = 2$, $N = 4$, and the functions $l
\colon \Delta_2 \times \Delta_4 \rightarrow \mathbb{R}$ and $g
\colon \Delta_4 \rightarrow \mathbb{R}$ are given by
\begin{align*}
&l(\delta_2^1, \delta_4^1) = 1, \quad l(\delta_2^1, \delta_4^2) =
l(\delta_2^1, \delta_4^3) = l(\delta_2^1, \delta_4^4) = 2 , \\
&l(\delta_2^2, x) = 3 \quad (x \in \Delta_4) , \\
&g(\delta_4^1) = g(\delta_4^2) = g(\delta_4^3) = 1 , \quad
g(\delta_4^4) = 2 .
\end{align*}
First, by definition the relation $\mathcal{S}$ contains all pairs
of the form $(a,a)$, namely, $(\delta_4^1, \delta_4^1)$,
$(\delta_4^2, \delta_4^2)$, $(\delta_4^3, \delta_4^3)$, and
$(\delta_4^4, \delta_4^4)$. Second, note that $g(\delta_4^2) =
g(\delta_4^3) = 1$, $l(\delta_2^1, \delta_4^2) = l(\delta_2^1,
\delta_4^3) = 2$, and $l(\delta_2^2, \delta_4^2) = l(\delta_2^2,
\delta_4^3) = 3$. Thus, both pairs $(\delta_4^2, \delta_4^3)$ and
$(\delta_4^3, \delta_4^2)$ belong to $\mathcal{S}$. Moreover, it is
easily checked that they are the only pairs of distinct states that
satisfy $g(x) = g(x')$, $l(\delta_2^1, x) = l(\delta_2^1, x')$, and
$l(\delta_2^2, x) = l(\delta_2^2, x')$ simultaneously. Hence no pair
other than those listed belongs to $\mathcal{S}$.
\end{example}

\begin{remark} \label{rmk4} \normalfont
It is noted in \cite{fornasini2014} that the cost function described
in (\ref{eq8a}) can be equivalently expressed in a linear form as $J
= \sum_{t=0}^{T-1} \theta \ltimes u(t) \ltimes x(t) + \mu x(T)$,
where $\mu$ is a row vector of $N$ components and $\theta =
[\theta_1, \, \theta_2, \, \ldots, \, \theta_M]$ with each
$\theta_i$ being an $N$-component row vector. We remark that the
index $J_{\mathcal{R}}$ appearing in Proposition~\ref{prop3} is
easily obtained from this expression. In fact, since the function
$g_{\mathcal{R}}$ is defined on $\Delta_{\widetilde{N}}$, it can be
expressed in the form $g_{\mathcal{R}}(x) = \mu_{\mathcal{R}} x$ for
some $\widetilde{N}$-component row vector $\mu_{\mathcal{R}}$. Let
$C$ be as in Proposition~\ref{prop3}. Then by (\ref{eq10}) we have
$\mu_{\mathcal{R}} C = \mu$ and so $\mu_{\mathcal{R}} = \mu C^+$,
where $C^+ = C^\top (C C^\top)^{-1}$ is the pseudoinverse of $C$. In
a similar manner, the function $l_{\mathcal{R}}$ defined by
(\ref{eq9}) can be equivalently expressed as $l_{\mathcal{R}}(u, x)
= \theta_{\mathcal{R}} \ltimes u \ltimes x$, where
$\theta_{\mathcal{R}} = [\theta_1', \, \theta_2', \, \ldots, \,
\theta_M']$ with $\theta_i' = \theta_i C^+$ for each $i$. Thus the
index cost $J_{\mathcal{R}}$ can be rewritten in a linear form as
follows: $J_{\mathcal{R}} = \sum_{t=0}^{T-1} \theta_{\mathcal{R}}
\ltimes u(t) \ltimes x_{\mathcal{R}}(t) + \mu_{\mathcal{R}}
x_{\mathcal{R}}(T)$.
\end{remark}

One can obtain analogs of Proposition~\ref{prop3} for other kinds of
optimal control problems (such as the infinite-horizon optimal or
average-cost optimal problems \cite{fornasini2014}). The essence of
the arguments is the same as that of Proposition~\ref{prop3}, and so
we omit them.

\subsection{Comparative simulations}

\label{sec4.3}

The proposed methods have been tested on several randomly generated
$16$-node networks. Recall that a BCN expressed by (\ref{eq1})
consists of two types of nodes, namely, internal nodes ($x_1,
\ldots, x_n$) and external control nodes ($u_1, \ldots, u_m$). We
considered the cases of $m = 1$, $2$, $3$, and $5$. When $m = 1$,
there are $15$ internal nodes and $1$ control nodes; the original
network size is $2^{15} = 32768$. When $m = 2$, there are $14$
internal nodes and $2$ control nodes; the original network size is
$2^{14} = 16384$. When $m = 3$, there are $13$ internal nodes, so
the original network size is $2^{13} = 8192$, and when $m = 5$ the
original network size is $2^{11} = 2048$. First, we evaluate the
efficiency of the quotient-based method given in
Proposition~\ref{prop2}. The target sets $\mathcal{M}$ of the
stabilization problem were randomly selected, with cardinality $k =
1$ and $k = 100$. Table~\ref{tab2} shows the numerical results
obtained for different combinations of $m$ and $k$.
\begin{table}
\caption{Comparison between controller design done with the
quotient-based method and done the conventional way.} \label{tab2}
\centering
\begin{tabular}{lllll}
\hline \multirow{2}* & \multicolumn{2}{c}{Size} &
\multicolumn{2}{c}{CPU time (sec)} \\
\cline{2-5} & Orig. BCN & Quotient  & Orig.
BCN & Quotient \\
\hline $m = 1$ & \multirow{2}*{$32768$} & \multirow{2}*{$6815$}
& \multirow{2}*{$9256.08$} & \multirow{2}*{$454.47$} \\
$k = 1$ \\
\hline $m = 1$ & \multirow{2}*{$32768$} & \multirow{2}*{$5807$}
& \multirow{2}*{$9845.81$} & \multirow{2}*{$475.89$} \\
$k = 100$ \\
\hline $m = 2$ & \multirow{2}*{$16384$} & \multirow{2}*{$4014$}
& \multirow{2}*{$2327.27$} & \multirow{2}*{$303.04$} \\
$k = 1$ \\
\hline $m = 2$ & \multirow{2}*{$16384$} & \multirow{2}*{$4647$} &
\multirow{2}*{$2401.16$} & \multirow{2}*{$158.76$} \\
$k = 100$ \\
\hline $m = 3$ & \multirow{2}*{$8192$} & \multirow{2}*{$2793$} &
\multirow{2}*{$590.42$} & \multirow{2}*{$53.79$} \\
$k = 1$ \\
\hline $m = 3$ & \multirow{2}*{$8192$} & \multirow{2}*{$3071$} &
\multirow{2}*{$601.22$} & \multirow{2}*{$65.40$} \\
$k = 100$ \\
\hline $m = 5$ & \multirow{2}*{$2048$} & \multirow{2}*{$887$} &
\multirow{2}*{$37.78$} & \multirow{2}*{$5.50$} \\
$k = 1$ \\
\hline $m = 5$ & \multirow{2}*{$2048$} & \multirow{2}*{$1015$} &
\multirow{2}*{$32.20$} & \multirow{2}*{$6.28$} \\
$k = 100$ \\
\hline
\end{tabular}
\end{table}
The second and third columns give the number of states of the
original networks and the number of states of the quotient systems,
reflecting the degree of reduction. The fourth column records the
CPU time spent for constructing stabilizing controllers directly
based on the original networks. Specifically, we followed the design
procedure proposed by Fornasini and Valcher \cite{fornasini2013b}
and Li et al. \cite{r.li2013} when $k = 1$, and the procedure of Guo
et al. \cite{guo2015} when $k = 100$. The CPU time required for
determining stabilizers via Proposition~\ref{prop2} is shown in the
last column. Similarly, Table~\ref{tab3} compares the network size
and the CPU time to obtain a solution to Problem~\ref{plm1}, with $T
= 40$.
\begin{table}
\caption{Comparison between direct and quotient-based methods for
solving Problem~\ref{plm1}.} \label{tab3} \centering
\begin{tabular}{lllll}
\hline \multirow{2}* & \multicolumn{2}{c}{Size} &
\multicolumn{2}{c}{CPU time (sec)} \\
\cline{2-5} & Orig. BCN & Quotient  & Orig.
BCN & Quotient \\
\hline $m = 1$ & $32768$ & $6087$ & $490.67$ & $115.87$ \\
$m = 2$ & $16384$ & $4506$ & $256.51$ & $74.94$ \\
$m = 3$ & $8192$ & $3441$ & $130.33$ & $55.50$ \\
$m = 5$ & $2048$ & $829$ & $27.96$ & $10.81$ \\
\hline
\end{tabular}
\end{table}
For the sake of simplicity, we assumed that the function $l(u,x)$
depends only on $u$, with the value $1$ if $u_1 = 1$ and $0$ if $u_1
= 0$; the function $g(x)$ was assumed to take the value $5$ if $x_1
= 0$ and the value $0$ otherwise. (Here we use binary
representations of $x$ and $u$.) The corresponding optimal control
problem was solved both by applying the algorithm of Fornasini and
Valcher \cite{fornasini2014} directly to the original network, and
by using the indirect method given in Proposition~\ref{prop3}. It is
seen that the proposed methods offer a reduction in computation time
compared to the state of the art, and the extent of reduction
increases (as a trend) with increasing size of the original network.
All computations were run on an Intel Core i7-3.00 GHz personal
computer with $8$ GB of RAM.

\section{A biological example}

\label{sec5}

We apply our methods to a Boolean model for lactose metabolism in
the bacterium \textit{E. coli} \cite{veliz2011}. The model consists
of $13$ variables ($1$ mRNA, $5$ proteins, and $7$ sugars) denoted
by $M$, $P$, $B$, $C$, $R$, $R_m$, $A$, $A_m$, $L$, $L_m$, $L_e$,
$L_{em}$ and $G_e$. Here, $R$ and $R_m$ are combined to indicate
concentration levels of a specific substance (the repressor
protein); that is, the concentration is low when $(R, R_m) = (0,
0)$, medium when $(R, R_m) = (0, 1)$, and high when $(R, R_m) = (1,
1)$. The fourth possibility, $(R, R_m) = (1, 0)$, is meaningless and
not allowed. The same situation is for the pairs $(A, A_m)$, $(L,
L_m)$, and $(L_e, L_{em})$ (see \cite{veliz2011} for more details on
this aspect). The equations describing the model are as follows:
\begin{align} \label{eq15a}
&M(t+1) = C(t) \wedge \neg R(t) \wedge \neg R_m (t) , \notag \\
&P(t+1) = M(t) , \qquad B(t+1) = M(t), \notag \\
&C(t+1) = \neg G_e (t), \notag \\
&R(t+1) = \neg A(t) \wedge \neg A_m (t) , \notag \\
&R_m (t+1) = (\neg A(t) \wedge \neg A_m (t)) \vee R(t) , \\
&A(t+1) = B(t) \wedge L(t) , \notag \\
&A_m (t+1) = L(t) \vee L_m (t) , \notag \\
&L(t+1) = P(t) \wedge L_e(t) \wedge \neg G_e(t) , \notag \\
&L_m (t+1) = ((L_{em}(t) \wedge P(t)) \vee L_e(t)) \wedge \neg
G_e(t) . \notag
\end{align}
We assume that the concentration of extracellular lactose is low
($L_e = L_{em} = 0$), and treat the extracellular glucose levels
($G_e$) as input to the model. Then the model can be rewritten as in
(\ref{eq3}) with\footnote{Here, $N$ is not a power of $2$, since for
some Boolean pairs in the model only three of the four values are
admissible. More precisely, since each of the variables $M$, $P$,
$B$, and $C$ has two possible values, whereas each of the pairs $(R,
R_m)$, $(A, A_m)$, and $(L, L_m)$ takes on only three possible
values, the total number of states of (\ref{eq15a}) is equal to $2^4
\cdot 3^3 = 432$; thus $N = 432$ in the algebraic representation.}
$N = 432$ and $M = 2$. The matrix $F \in \mathcal{L}^{432 \times
864}$ is detailed in the Appendix.

\textit{(1) Stabilization.} When extracellular lactose levels get
low, the model is known to exhibit two steady states
\cite{veliz2011}, expressed in the canonical vector form as
$\delta_{432}^{387}$ and $\delta_{432}^{414}$. Let $\mathcal{M} =
\{\delta_{432}^{387}\}$ and let $\mathcal{S}$ be the equivalence
relation produced by the partition $\{\, \mathcal{M}, \:
\Delta_{432} - \mathcal{M} \,\}$. Then by following the procedure
described in Section~\ref{sec3}, we get a quotient system
$\Sigma_{\mathcal{R}} \colon \, x_{\mathcal{R}}(t+1) = \widetilde{F}
\ltimes u(t) \ltimes x_{\mathcal{R}}(t)$, with $x_{\mathcal{R}} \in
\Delta_8$, $u \in \Delta_2$, and $\widetilde{F} \in \mathcal{L}^{8
\times 16}$ given by
\begin{equation*}
\widetilde{F} = \big[
\begin{matrix}
\delta_8^2 & \delta_8^2 & \delta_8^7 & \delta_8^2 & \delta_8^4 &
\delta_8^7 & \delta_8^2 & \delta_8^4 & \delta_8^1 &\delta_8^1 &
\delta_8^6 & \delta_8^6 & \delta_8^3 & \delta_8^7 & \delta_8^2 &
\delta_8^4
\end{matrix} \big] .
\end{equation*}
The matrix $C$ obtained during the procedure (which is of size $8
\times 432$ and not given explicitly) satisfies $C
\delta_{432}^{387} = \delta_8^1$. It is not hard to see that for any
\begin{equation*}
K = \big[
\begin{matrix}
\delta_2^2 \; & \delta_2^2 \; & * \; & * \; & * \; & * \; & * \; &
* \,
\end{matrix} \big]
\end{equation*}
($*$ denoting columns that can be either $\delta_2^1$ or
$\delta_2^2$), the feedback law given by $x_{\mathcal{R}} \mapsto
u(x_{\mathcal{R}}) = K x_{\mathcal{R}}$ stabilizes the quotient
system to $\delta_8^1$. Proposition~\ref{prop2} then ensures that
the original model can be globally stabilized to the state
$\delta_{432}^{387}$ via the feedback law $x \mapsto u(Cx) = KCx$. A
similar argument can be made for finding a feedback controller that
stabilizes the model to the state $\delta_{432}^{414}$; the details
are not repeated here.

\begin{remark} \label{rmk5a} \normalfont
It required about $6.5$ s to find the above controller directly
based on the procedure described in \cite{fornasini2013b} and
\cite{r.li2013}. In contrast, it took only $1.16$ s to obtain the
same stabilizer by using the quotient-based method. Thus in this
case there is an increase in speed by a factor of about $5$ to $6$
when the proposed method is employed.
\end{remark}

\textit{(2) Optimal control.} Assume that $T = 3$, the initial
condition $x(0) = \delta_{432}^{10}$, and the functions $l(u, x)$
and $g(x)$ are given by
\begin{align*}
l(\delta_2^1, x) &= 1, \quad l(\delta_2^2, x) = 2 \quad (x \in
\Delta_{432}) , \\
g(\delta_{432}^1) &= \cdots = g(\delta_{432}^{54}) = 0 , \quad
g(\delta_{432}^{55}) = \cdots \\
& \qquad \qquad \qquad \qquad \qquad \qquad \;\;\;\; =
g(\delta_{432}^{432}) = 5 .
\end{align*}
Here we remark that the states $\delta_{432}^1, \ldots,
\delta_{432}^{54}$ correspond to the \textit{lac} operon, which is
responsible for the metabolism of lactose, being ON (induced); cf.
\cite{veliz2011}. The above choice of $g(x)$ then indicates that the
operon is desired to be in an ON state after intervention. By
proceeding as in Section~\ref{sec4.2}, one can obtain a quotient
system $\Sigma_{\mathcal{R}}$ with $N = 12$, $M = 2$, and the matrix
\begin{align*}
\setcounter{MaxMatrixCols}{20} \widetilde{F} &= \big[
\begin{matrix}
\delta_{12}^7 & \delta_{12}^7 & \delta_{12}^{12} & \delta_{12}^{12}
& \delta_{12}^{12} & \delta_{12}^7 & \delta_{12}^7 &
\delta_{12}^{12} & \delta_{12}^{12} & \delta_{12}^{12} &
\delta_{12}^7 & \delta_{12}^7
\end{matrix} \\
& \qquad \
\begin{matrix}
\delta_{12}^3 & \delta_{12}^4 & \delta_{12}^{12} & \delta_{12}^8 &
\delta_{12}^9 & \delta_{12}^1 & \delta_{12}^7 & \delta_{12}^{12} &
\delta_{12}^8 & \delta_{12}^9 & \delta_{12}^1 & \delta_{12}^7
\end{matrix} \big] .
\end{align*}
The matrix $C$ satisfies $C x(0) = \delta_{12}^{11}$, and the
induced functions $l_{\mathcal{R}}$ and $g_{\mathcal{R}}$ are
defined by
\begin{align*}
&l_{\mathcal{R}}(\delta_2^1, x_{\mathcal{R}}) = 1, \quad
l_{\mathcal{R}}(\delta_2^2, x_{\mathcal{R}}) = 2 \quad
(x_{\mathcal{R}} \in \Delta_{12}) , \\
&g_{\mathcal{R}}(\delta_{12}^1) = \cdots =
g_{\mathcal{R}}(\delta_{12}^7) = 5 , \quad
g_{\mathcal{R}}(\delta_{12}^8) = \cdots \\
& \qquad \qquad \qquad \qquad \qquad \qquad \qquad \qquad \quad =
g_{\mathcal{R}}(\delta_{12}^{12}) = 0 .
\end{align*}
It is straightforward to see that the input sequence
\begin{equation*}
u^\ast (0) = u^\ast (1) = \delta_2^2 , \quad \; u^\ast (2) =
\delta_2^1
\end{equation*}
is optimal for $\Sigma_{\mathcal{R}}$, with the optimal cost
$J_{\mathcal{R}}^\ast = 5$, so it also solves the optimal control
problem for the original model, and the optimal cost is $J^\ast =
J_{\mathcal{R}}^\ast = 5$. Moreover, we see from the value of
$J^\ast$ that the optimal input indeed steers the model to an ON
state, as desired.

\begin{remark} \label{rmk5b} \normalfont
As for the time comparison, we report that it took about $1.5$ s to
solve this problem directly by the method of Fornasini and Valcher
\cite{fornasini2014}, while the above indirect procedure took only
$0.79$ s. Thus there is about $2$ times saving in speed when the
quotient-based method is employed.
\end{remark}

\section{Discussions}

\label{sec6}

The paper has considered quotients of BCNs. Two possible
applications of the quotient description have been presented in
Section~\ref{sec4}, where we have seen that the stabilization and
optimal control problems of the original BCNs can be boiled down to
those of the quotient systems. Let us mention that we have presented
only a few examples of such applications, and there are quite a few
other problems such as output tracking and observability checking
that can also be dealt with in this manner. We do not include the
details of these applications for reasons of space.

Since the number of states of the quotient $\Sigma_{\mathcal{R}}$ is
precisely the number of the equivalence classes generated by
$\mathcal{R}$, the coarser the relation $\mathcal{R}$, the smaller
is $\Sigma_{\mathcal{R}}$ and, thus, the greater is the degree of
reduction. Recall that the relation $\mathcal{R}$ is required to
satisfy (\ref{eq7}), which is related to the dynamics of $\Sigma$.
Thus, the degree of reduction is affected by the specific dynamics
of the original network. Also, since in practice different relations
are required for different applications (cf. Sections~\ref{sec4.1}
and \ref{sec4.2}), despite the same original network, the reduction
degree may still be different, depending on the specific problems to
be solved. The size of the quotient systems appearing in the
numerical experiments reported in Section~\ref{sec4.3} is about
$50$--$20\%$ when compared to the original networks. In the
biological example presented in Section~\ref{sec5}, the size of the
reduced state space is less than $3\%$ of that of the original one.

In Section~\ref{sec4.3}, we have limited the discussion to networks
with $16$ nodes, since we would like to compute the control policy
on each originally generated network and list the exact time that
the standard methods require, in order to make the comparisons. Here
we report that besides these simulations, we also tested our methods
on networks with about $20$--$23$ nodes. We observed that for most
instances, the standard methods ran out of memory whereas the
proposed methods were able to obtain a solution in a matter of
minutes to hours. We do not present the detailed numerical results
due to the limitations on the paper length.

{\small

}

\clearpage \onecolumn
\section*{Appendix}

The matrix $F$ for the biological model discussed in
Section~\ref{sec5} is
\begin{align*}
F = \delta_{432}& [255 \;\; 258 \;\; 261 \;\; 255 \;\; 258 \;\; 261
\;\; 246 \;\; 249 \;\; 252 \;\; 264 \;\; 267 \;\; 270 \;\; 264 \;\;
267 \;\; 270 \;\; 246 \;\; 249 \;\; 252 \;\;\;\: 48 \;\;\;\: 51
\;\;\;\: 54 \\
&\;\;\: 48 \;\;\;\: 51 \;\;\;\: 54 \;\;\;\: 30 \;\;\;\: 33 \;\;\;\:
36 \;\;255 \;\; 258 \;\; 261 \;\; 255 \;\; 258 \;\; 261 \;\; 246
\;\; 249 \;\; 252 \;\; 264 \;\; 267 \;\; 270 \;\; 264 \;\; 267 \;\;
270 \\
&\; 246 \;\; 249 \;\; 252 \;\; 264 \;\; 267 \;\; 270 \;\; 264 \;\;
267 \;\; 270 \;\; 246 \;\; 249 \;\; 252 \;\; 258 \;\; 258 \;\; 261
\;\; 258 \;\; 258 \;\; 261 \;\; 249 \;\; 249 \;\; 252 \\
&\; 267 \;\; 267 \;\; 270 \;\; 267 \;\; 267 \;\; 270 \;\; 249 \;\;
249 \;\; 252 \;\;\;\: 51 \;\;\;\: 51 \;\;\;\: 54 \;\;\;\: 51
\;\;\;\: 51 \;\;\;\: 54 \;\;\;\: 33 \;\;\;\: 33 \;\;\;\: 36 \;\; 258
\;\; 258 \;\; 261 \\
&\; 258 \;\; 258 \;\; 261 \;\; 249 \;\; 249 \;\; 252 \;\; 267 \;\;
267 \;\; 270 \;\; 267 \;\; 267 \;\; 270 \;\; 249 \;\; 249 \;\; 252
\;\; 267 \;\; 267 \;\; 270 \;\; 267 \;\; 267 \;\; 270 \\
&\; 249 \;\; 249 \;\; 252 \;\; 255 \;\; 258 \;\; 261 \;\; 255 \;\;
258 \;\; 261 \;\; 246 \;\; 249 \;\; 252 \;\; 264 \;\; 267 \;\; 270
\;\; 264 \;\; 267 \;\; 270 \;\; 246 \;\; 249 \;\; 252 \\
&\;\;\: 48 \;\;\;\: 51 \;\;\;\: 54 \;\;\;\: 48 \;\;\;\: 51 \;\;\;\:
54 \;\;\;\: 30 \;\;\;\: 33 \;\;\;\: 36 \;\; 255 \;\; 258 \;\; 261
\;\; 255 \;\; 258 \;\; 261 \;\; 246 \;\; 249 \;\; 252 \;\; 264 \;\;
267 \;\; 270 \\
&\; 264 \;\; 267 \;\; 270 \;\; 246 \;\; 249 \;\; 252 \;\; 264 \;\;
267 \;\; 270 \;\; 264 \;\; 267 \;\; 270 \;\; 246 \;\; 249 \;\; 252
\;\; 258 \;\; 258 \;\; 261 \;\; 258 \;\; 258 \;\; 261 \\
&\; 249 \;\; 249 \;\; 252 \;\; 267 \;\; 267 \;\; 270 \;\; 267 \;\;
267 \;\; 270 \;\; 249 \;\; 249 \;\; 252 \;\;\;\: 51 \;\;\;\: 51
\;\;\;\: 54 \;\;\;\: 51 \;\;\;\: 51 \;\;\;\: 54 \;\;\;\: 33 \;\;\;\:
33 \;\;\;\: 36 \\
&\; 258 \;\; 258 \;\; 261 \;\; 258 \;\; 258 \;\; 261 \;\; 249 \;\;
249 \;\; 252 \;\; 267 \;\; 267 \;\; 270 \;\; 267 \;\; 267 \;\; 270
\;\; 249 \;\; 249 \;\; 252 \;\; 267 \;\; 267 \;\; 270 \\
&\; 267 \;\; 267 \;\; 270 \;\; 249 \;\; 249 \;\; 252 \;\; 417 \;\;
420 \;\; 423 \;\; 417 \;\; 420 \;\; 423 \;\; 408 \;\; 411 \;\; 414
\;\; 426 \;\; 429 \;\; 432 \;\; 426 \;\; 429 \;\; 432 \\
&\; 408 \;\; 411 \;\; 414 \;\; 210 \;\; 213 \;\; 216 \;\; 210 \;\;
213 \;\; 216 \;\; 192 \;\; 195 \;\; 198 \;\; 417 \;\; 420 \;\; 423
\;\; 417 \;\; 420 \;\; 423 \;\; 408 \;\; 411 \;\; 414 \\
&\; 426 \;\; 429 \;\; 432 \;\; 426 \;\; 429 \;\; 432 \;\; 408 \;\;
411 \;\; 414 \;\; 426 \;\; 429 \;\; 432 \;\; 426 \;\; 429 \;\; 432
\;\; 408 \;\; 411 \;\; 414 \;\; 420 \;\; 420 \;\; 423 \\
&\; 420 \;\; 420 \;\; 423 \;\; 411 \;\; 411 \;\; 414 \;\; 429 \;\;
429 \;\; 432 \;\; 429 \;\; 429 \;\; 432 \;\; 411 \;\; 411 \;\; 414
\;\; 213 \;\; 213 \;\; 216 \;\; 213 \;\; 213 \;\; 216 \\
&\; 195 \;\; 195 \;\; 198 \;\; 420 \;\; 420 \;\; 423 \;\; 420 \;\;
420 \;\; 423 \;\; 411 \;\; 411 \;\; 414 \;\; 429 \;\; 429 \;\; 432
\;\; 429 \;\; 429 \;\; 432 \;\; 411 \;\; 411 \;\; 414 \\
&\; 429 \;\; 429 \;\; 432 \;\; 429 \;\; 429 \;\; 432 \;\; 411 \;\;
411 \;\; 414 \;\; 417 \;\; 420 \;\; 423 \;\; 417 \;\; 420 \;\; 423
\;\; 408 \;\; 411 \;\; 414 \;\; 426 \;\; 429 \;\; 432 \\
&\; 426 \;\; 429 \;\; 432 \;\; 408 \;\; 411 \;\; 414 \;\; 210 \;\;
213 \;\; 216 \;\; 210 \;\; 213 \;\; 216 \;\; 192 \;\; 195 \;\; 198
\;\; 417 \;\; 420 \;\; 423 \;\; 417 \;\; 420 \;\; 423 \\
&\; 408 \;\; 411 \;\; 414 \;\; 426 \;\; 429 \;\; 432 \;\; 426 \;\;
429 \;\; 432 \;\; 408 \;\; 411 \;\; 414 \;\; 426 \;\; 429 \;\; 432
\;\; 426 \;\; 429 \;\; 432 \;\; 408 \;\; 411 \;\; 414 \\
&\; 420 \;\; 420 \;\; 423 \;\; 420 \;\; 420 \;\; 423 \;\; 411 \;\;
411 \;\; 414 \;\; 429 \;\; 429 \;\; 432 \;\; 429 \;\; 429 \;\; 432
\;\; 411 \;\; 411 \;\; 414 \;\; 213 \;\; 213 \;\; 216 \\
&\; 213 \;\; 213 \;\; 216 \;\; 195 \;\; 195 \;\; 198 \;\; 420 \;\;
420 \;\; 423 \;\; 420 \;\; 420 \;\; 423 \;\; 411 \;\; 411 \;\; 414
\;\; 429 \;\; 429 \;\; 432 \;\; 429 \;\; 429 \;\; 432 \\
&\; 411 \;\; 411 \;\; 414 \;\; 429 \;\; 429 \;\; 432 \;\; 429 \;\;
429 \;\; 432 \;\; 411 \;\; 411 \;\; 414 \;\; 228 \;\; 231 \;\; 234
\;\; 228 \;\; 231 \;\; 234 \;\; 219 \;\; 222 \;\; 225 \\
&\; 237 \;\; 240 \;\; 243 \;\; 237 \;\; 240 \;\; 243 \;\; 219 \;\;
222 \;\; 225 \;\;\;\: 21 \;\;\;\: 24 \;\;\;\: 27 \;\;\;\: 21
\;\;\;\: 24 \;\;\;\: 27 \quad\;\; 3 \quad\;\; 6 \quad\;\; 9 \;\; 228
\;\; 231 \;\; 234 \\
&\; 228 \;\; 231 \;\; 234 \;\; 219 \;\; 222 \;\; 225 \;\; 237 \;\;
240 \;\; 243 \;\; 237 \;\; 240 \;\; 243 \;\; 219 \;\; 222 \;\; 225
\;\; 237 \;\; 240 \;\; 243 \;\; 237 \;\; 240 \;\; 243 \\
&\; 219 \;\; 222 \;\; 225 \;\; 231 \;\; 231 \;\; 234 \;\; 231 \;\;
231 \;\; 234 \;\; 222 \;\; 222 \;\; 225 \;\; 240 \;\; 240 \;\; 243
\;\; 240 \;\; 240 \;\; 243 \;\; 222 \;\; 222 \;\; 225 \\
&\;\;\: 24 \;\;\;\: 24 \;\;\;\: 27 \;\;\;\: 24 \;\;\;\: 24 \;\;\;\:
27 \quad\;\; 6 \quad\;\; 6 \quad\;\; 9 \;\; 231 \;\; 231 \;\; 234
\;\; 231 \;\; 231 \;\; 234 \;\; 222 \;\; 222 \;\; 225 \;\; 240 \;\;
240 \;\; 243 \\
&\; 240 \;\; 240 \;\; 243 \;\; 222 \;\; 222 \;\; 225 \;\; 240 \;\;
240 \;\; 243 \;\; 240 \;\; 240 \;\; 243 \;\; 222 \;\; 222 \;\; 225
\;\; 228 \;\; 231 \;\; 234 \;\; 228 \;\; 231 \;\; 234 \\
&\; 219 \;\; 222 \;\; 225 \;\; 237 \;\; 240 \;\; 243 \;\; 237 \;\;
240 \;\; 243 \;\; 219 \;\; 222 \;\; 225 \;\;\;\: 21 \;\;\;\: 24
\;\;\;\: 27 \;\;\;\: 21 \;\;\;\: 24 \;\;\;\: 27 \quad\;\; 3
\quad\;\; 6 \quad\;\; 9 \\
&\; 228 \;\; 231 \;\; 234 \;\; 228 \;\; 231 \;\; 234 \;\; 219 \;\;
222 \;\; 225 \;\; 237 \;\; 240 \;\; 243 \;\; 237 \;\; 240 \;\; 243
\;\; 219 \;\; 222 \;\; 225 \;\; 237 \;\; 240 \;\; 243 \\
&\; 237 \;\; 240 \;\; 243 \;\; 219 \;\; 222 \;\; 225 \;\; 231 \;\;
231 \;\; 234 \;\; 231 \;\; 231 \;\; 234 \;\; 222 \;\; 222 \;\; 225
\;\; 240 \;\; 240 \;\; 243 \;\; 240 \;\; 240 \;\; 243 \\
&\; 222 \;\; 222 \;\; 225 \;\;\;\: 24 \;\;\;\: 24 \;\;\;\: 27
\;\;\;\: 24 \;\;\;\: 24 \;\;\;\: 27 \quad\;\; 6 \quad\;\; 6
\quad\;\; 9 \;\; 231 \;\; 231 \;\; 234 \;\; 231 \;\; 231 \;\; 234
\;\; 222 \;\; 222 \;\; 225 \\
&\; 240 \;\; 240 \;\; 243 \;\; 240 \;\; 240 \;\; 243 \;\; 222 \;\;
222 \;\; 225 \;\; 240 \;\; 240 \;\; 243 \;\; 240 \;\; 240 \;\; 243
\;\; 222 \;\; 222 \;\; 225 \;\; 390 \;\; 393 \;\; 396 \\
&\; 390 \;\; 393 \;\; 396 \;\; 381 \;\; 384 \;\; 387 \;\; 399 \;\;
402 \;\; 405 \;\; 399 \;\; 402 \;\; 405 \;\; 381 \;\; 384 \;\; 387
\;\; 183 \;\; 186 \;\; 189 \;\; 183 \;\; 186 \;\; 189 \\
&\; 165 \;\; 168 \;\; 171 \;\; 390 \;\; 393 \;\; 396 \;\; 390 \;\;
393 \;\; 396 \;\; 381 \;\; 384 \;\; 387 \;\; 399 \;\; 402 \;\; 405
\;\; 399 \;\; 402 \;\; 405 \;\; 381 \;\; 384 \;\; 387 \\
&\; 399 \;\; 402 \;\; 405 \;\; 399 \;\; 402 \;\; 405 \;\; 381 \;\;
384 \;\; 387 \;\; 393 \;\; 393 \;\; 396 \;\; 393 \;\; 393 \;\; 396
\;\; 384 \;\; 384 \;\; 387 \;\; 402 \;\; 402 \;\; 405 \\
&\; 402 \;\; 402 \;\; 405 \;\; 384 \;\; 384 \;\; 387 \;\; 186 \;\;
186 \;\; 189 \;\; 186 \;\; 186 \;\; 189 \;\; 168 \;\; 168 \;\; 171
\;\; 393 \;\; 393 \;\; 396 \;\; 393 \;\; 393 \;\; 396 \\
&\; 384 \;\; 384 \;\; 387 \;\; 402 \;\; 402 \;\; 405 \;\; 402 \;\;
402 \;\; 405 \;\; 384 \;\; 384 \;\; 387 \;\; 402 \;\; 402 \;\; 405
\;\; 402 \;\; 402 \;\; 405 \;\; 384 \;\; 384 \;\; 387 \\
&\; 390 \;\; 393 \;\; 396 \;\; 390 \;\; 393 \;\; 396 \;\; 381 \;\;
384 \;\; 387 \;\; 399 \;\; 402 \;\; 405 \;\; 399 \;\; 402 \;\; 405
\;\; 381 \;\; 384 \;\; 387 \;\; 183 \;\; 186 \;\; 189 \\
&\; 183 \;\; 186 \;\; 189 \;\; 165 \;\; 168 \;\; 171 \;\; 390 \;\;
393 \;\; 396 \;\; 390 \;\; 393 \;\; 396 \;\; 381 \;\; 384 \;\; 387
\;\; 399 \;\; 402 \;\; 405 \;\; 399 \;\; 402 \;\; 405 \\
&\; 381 \;\; 384 \;\; 387 \;\; 399 \;\; 402 \;\; 405 \;\; 399 \;\;
402 \;\; 405 \;\; 381 \;\; 384 \;\; 387 \;\; 393 \;\; 393 \;\; 396
\;\; 393 \;\; 393 \;\; 396 \;\; 384 \;\; 384 \;\; 387 \\
&\; 402 \;\; 402 \;\; 405 \;\; 402 \;\; 402 \;\; 405 \;\; 384 \;\;
384 \;\; 387 \;\; 186 \;\; 186 \;\; 189 \;\; 186 \;\; 186 \;\; 189
\;\; 168 \;\; 168 \;\; 171 \;\; 393 \;\; 393 \;\; 396 \\
&\; 393 \;\; 393 \;\; 396 \;\; 384 \;\; 384 \;\; 387 \;\; 402 \;\;
402 \;\; 405 \;\; 402 \;\; 402 \;\; 405 \;\; 384 \;\; 384 \;\; 387
\;\; 402 \;\; 402 \;\; 405 \;\; 402 \;\; 402 \;\; 405 \\
&\; 384 \;\; 384 \;\; 387 ].
\end{align*}
The above notation means that the first column of $F$ is
$\delta_{432}^{255}$, the second column is $\delta_{432}^{258}$, and
so on.

\end{document}